\documentclass[11pt,leqno]{amsart}

\sloppy
\usepackage{epsfig}
\usepackage[utf8]{inputenc}

\usepackage{amsthm}

 \usepackage{mathtools}
\mathtoolsset{showonlyrefs}

 \usepackage[T1]{fontenc}

 \usepackage{amssymb}

\setlength{\textwidth}{15cm}
\setlength{\textheight}{19cm}
\voffset=-1cm
\hoffset=-2cm
\topmargin 1cm
\headsep 1cm


    \newtheorem{theorem}{Theorem}
    
    \newtheorem{proposition}{Proposition}

    \newtheorem{remark}{Remark}
    \newtheorem{corollary}{Corollary}
    
    \newtheorem{definition}{Definition}
%


\newcommand{\C}{\mathcal{C}}

\newcommand{\D}{\mathcal{D}}

\newcommand{\G}{\mathcal G}

%
    \newcommand\CC{\hbox{C\kern -.58em {\raise .54ex \hbox
    {$\scriptscriptstyle |$}}
    \kern-.55em {\raise .53ex \hbox{$\scriptscriptstyle |$}} }}
      \newcommand\qd{\hfill$\sqcap\kern-8.0pt\hbox{$\sqcup$}$}
    \newcommand\NN{\hbox{I\kern-.2em\hbox{N}}}
       \newcommand\nn{\hbox{I\kern-.2em\hbox{N}}}
    \newcommand\RR{I\!\!R}
    \newcommand\sRR{{\sl \hbox{I\kern-.2em\hbox{R}}}}
    \newcommand\QQ{\hbox{I\kern-.53em\hbox{Q}}}
     
    
    


\newcommand\sign{\hbox{Sign}}

\newcommand\signp{{\hbox{Sign}^+}}
    
    

 \newcommand\spo{\hbox{Sign}_0^+}
 \newcommand \smo{\hbox{Sign}_0^-}
    
  

    
    
    
    \newcommand\R{\displaystyle {\mathcal R}}
    
    
    
    
    
    

   
  \everymath{\displaystyle}

\usepackage{stackengine}
\usepackage{scalerel}
\usepackage{xcolor,amssymb}


\begin{document}
	 
\title[Mathematical Study of Reaction-Diffusion in Congested Crowd Motion]
{Mathematical Study of Reaction-Diffusion in Congested Crowd Motion}
 \newcommand{\cs}{$^\dagger$} \newcommand{\cm}{$^\ddagger$}

\author[N. Igbida]{Noureddine Igbida\cs}
\author[F. Karami]{Fahd Karami\cm}
\author[D. Meskine]{Driss Meskine\cm}

\thanks{\cs Institut de recherche XLIM-DMI, UMR-CNRS 6172, Facult\'e des Sciences et Techniques, Universit\'e de Limoges, France. Email:   noureddine.igbida@unilim.fr}
\thanks{\cm  Laboratoire de Mathématique Informatique et modélisation des Systèmes Complèxes, EST Essaouira,  Cadi Ayyad University 
	Marrakesh,   Morocco.   Emails:   fa.karami@uca.ac.ma, dr.meskine@uca.ac.ma}


\newcommand{\nfont}{\fontshape{n}\selectfont}


\date{\today}

\begin{abstract}
	This paper establishes existence, uniqueness, and an $L^1-$comparison principle for weak solutions of a PDE system modeling phase transition reaction-diffusion in congested crowd motion.  We consider a general reaction term and mixed homogeneous (Dirichlet and Neumann) boundary conditions. This model is applicable to various problems, including multi-species diffusion-segregation and pedestrian dynamics with congestion.  Furthermore, our analysis of the reaction term yields sufficient conditions combining the drift with  the reaction that guarantee the absence of congestion, reducing the dynamics to a constrained linear reaction-transport equation.  \end{abstract}

\maketitle

\section{Introduction}
\setcounter{equation}{0}
 
The study of crowd motion has drawn significant inspiration from fluid dynamics, with nonlinear partial differential equations (PDEs) serving as fundamental tools for modeling. Over the past two decades, substantial progress has been made in understanding congestion dynamics using nonlinear PDEs. This paper explores a specific class of models that incorporate reaction terms to analyze congestion in crowd motion.

The starting point  of our mathematical framework is the linear transport equation:
\begin{equation} \label{transport0}
	\partial_t  u + \nabla \cdot ( u\:  U) = g,
\end{equation} 
where $ u= u(t,x)$ represents the density of individuals at time $t \geq 0$ and position $x \in \mathbb{R}^N$ ($N=2$). To ensure that the carrying capacity of the environment is not exceeded, we impose the constraint:
\begin{equation}\label{constraints0}
	0 \leq  u \leq 1.
\end{equation}  
The velocity field $U$ dictates the collective movement of individuals. Despite numerous proposed models for $U$, no universally accepted formulation exists.

The source term \(g\) in \eqref{transport0} is introduced to account for the appearance or disappearance of individuals within the domain. For instance, when \(u\) models the active population, a central role of \(g\) is to incorporate absorption terms that represent the loss of active individuals, either because they leave the domain or because their motion slows down under congestion effects.  

A typical choice may be given by  
\begin{equation}\label{example1}
	 g = -\alpha\, u \,(u - u_{\text{eq}}), 
\end{equation}
which models the tendency of individuals to align with the local crowd density. In this case, movement slows down in high-density regions due to congestion. Here, \(u_{\text{eq}}\) denotes an equilibrium density, while \(\alpha > 0\) is a parameter controlling the rate of relaxation toward equilibrium. The results presented in the last section illustrate how this setting can be treated as a special case of the general framework.  

Another possibility is  
\[
g = -\alpha\, u^2,
\]  
which models density-dependent slowdown: as density increases, pedestrian motion naturally decelerates, ensuring reduced flow in crowded regions.  

To describe \emph{fatigue effects} over long time horizons, one may instead consider  
\[
g = -\lambda\, u\, e^{-\beta t},
\]  
which captures the gradual reduction in movement caused by fatigue.  

Finally, \(g\) can also represent \emph{external factors} influencing crowd density, such as the influx of spectators entering a stadium or the outflow of individuals during an evacuation. In such cases, one may set  
\[
g(x,t) = \beta\, f(x,t),
\]  
where \(\beta > 0\) is the influx (or outflux) rate and \(f(x,t)\) prescribes the spatial and temporal distribution of the entry or exit points.  

\medskip 
The main challenge of the model \eqref{transport0}-\eqref{constraints0} in congested crowd modeling is defining a velocity field $U$ that balances macroscopic crowd dynamics, such as reaching a target or avoiding obstacles, with individual behaviors, including speed variations, avoidance strategies, and attraction to crowds.   While maintaining the primary goal  of  move toward a designated target along a predefined trajectory guided by a velocity vector $V=V(t,x)$, we anticipate a deviation flow $W$  that accounts for local interactions, capturing avoidance and attraction behaviors based on the distribution of nearby pedestrians.  The correction flux $W$ enable  the crowd dynamically adjusts trajectories to mitigate congestion.  Specifically, we express the velocity field as:
\begin{equation}\label{decomposeU}
	U=V+ W,
\end{equation}   
 A well-known example of this framework    was first introduced in \cite{MRS1} within the context of gradient flow in the Wasserstein space of probability measures (see also \cite{MRS1,MRS2,MS}).    In  this framework, the vector field  $W=W[ u]$  is determined by an unknown potential $p$ supported in the congested region $[ u=1$, yielding the formulation:
 \begin{equation}
 	W= - \nabla p, \quad \text{where } p \geq 0 \text{ and } p( u-1)=0.
 \end{equation} 
 In some sense this formulation tends  to account for the random Brownian-like movement of individuals at a microscopic level,  to addressing potential congestion.   This yields  to the formulation:
 \begin{equation}\label{W}
 	U =V-\nabla p, \quad \text{where } p \geq 0 \text{ and } p\: ( u-1)=0.
 \end{equation} 
    Other version of corrections  $W$ are  proposed in \cite{EIJ}  (see also \cite{IgUr}) within the context of minimum flow problem. For instance, one can  take  the term $W= -\vert  \nabla p\vert^{p-2}  \nabla p $ to describe the case where   the crowd behaves like a non-Newtonian shear thickening or dilatant fluid, with the viscosity depending on the shear stress. To take $p\to \infty$   simulates the movement of individuals like grains in a sandpile   to alleviate congestion.  
  
   \medskip 
 Keeping in mind the representative case \eqref{W},  the model we have in mind falls into the scope of the nonlinear diffusion-transport equation  of the type
\begin{equation}\label{pdetype}
	\left\{
	\begin{array}{l}
		\displaystyle \frac{\partial u }{\partial t}  -\Delta p +\nabla \cdot (u  \: V)= g   \\    \\
		\displaystyle u \in \sign(p)\end{array}\right.  \quad  \hbox{ in } Q:= (0,T)\times \Omega,
\end{equation} 
   Here,   $\Omega\subset\RR^N$ be a bounded   open set,  $\sign$ is the maximal monotone graph defined in $\RR$ by
$$\sign(r)= \left\{ \begin{array}{ll}
	\displaystyle 1 &\hbox{ for any }r>0\\  
	\displaystyle [-1,1]\quad & \hbox{ for } r=0\\
	\displaystyle -1 &\hbox{ for any }r<0, \end{array}\right. $$
$V$ and $g$ are the given  velocity field and the reaction term respectively, satisfying besides assumptions we precise next.  
 Observe that no sign condition is imposed on \(u\). In this case, the PDE may describe segregation phenomena, where the positive and negative parts of \(u\) correspond to two distinct populations subject to segregation effects. Moreover,  the reaction term can be chosen in the form  
\[
g(.,u) = g_1(\cdot, u^+) + g_2(\cdot, u^-),
\]  
where \(u^+\) and \(u^-\) denote, respectively, the positive and negative parts of \(u\).

In the case of nonnegative solution (one phase problem), the problem may be written in the widespread  form
\begin{equation} \label{pdetype+}
	\left\{
	\begin{array}{l}
		\displaystyle \frac{\partial u }{\partial t}  -\Delta p +\nabla \cdot (u  \: V)= g  \\    \\
		\displaystyle  0\leq u\leq 1,\:  p\geq 0,\:  p(u-1)=0 \end{array}\right.  \quad  \hbox{ in } Q.
\end{equation} 
Following existence, uniqueness and $L^1-$contraction approach  of the work \cite{Igshaw}, this  paper explores these mathematical dynamic \eqref{pdetype}, focusing on existence, uniqueness, and stability properties of solutions. We consider \eqref{pdetype} subject to 

\begin{equation}\label{BC0}
	\left\{  
		\begin{array}{ll}
		\displaystyle p= 0  & \hbox{ on }\Sigma_D:= (0,T)\times \Gamma_D\\  \\
		\displaystyle (\nabla p- u  \: V)\cdot \nu = 0  & \hbox{ on }\Sigma_N:= (0,T)\times \Gamma_N  \end{array} \right.
\end{equation}  
and initial data
$$	\displaystyle  u (0)=u _0 \quad \hbox{ in }\Omega.$$ 
Here  $\Gamma_D$ and $ \Gamma_N$ is a partition of the boundary $\partial \Omega$ assume  to be     regular    of class $\C^2.$  

\medskip 
 Degenerate parabolic problems like \eqref{pdetype} have seen considerable progress in recent decades, notably due to  Carillo's work \cite{Ca} extending the Kruzhkov doubling of variables technique.  While  Carillo's weak-entropy solution concept and related techniques (see \cite{AnIgSurvey}) are powerful, they primarily address cases with space-independent $V$ and homogeneous Dirichlet boundary conditions.  Although the entropic solution is suitable for general problems, the uniqueness of weak solutions and their $L^1-$comparison principle for linear drift cases remains a challenging question.  
 
 \medskip
 Despite the second-order term, the first-order term involving the vector field $V$ is crucial.  When $p\equiv 0,$  the PDE reduces to a linear initial-boundary value problem for the continuity equation.  Uniqueness is generally expected under assumptions like bounded total variation coefficients for $V. $  The regularity of $V$  and the treatment of boundary conditions are intertwined with uniqueness proofs.  The direction of $V$ on the boundary influences the handling of flux traces (cf. \cite{Ambrosio, ACM, Anzellotti, ChenFrid, ChenTZ, AnBo, AnIg2}). The concept of renormalized solutions (cf. \cite{DiLions, Boyer, BoyerFabrie, Mischler, Ambrosio, AC, ACM, CDS, Figali, LebrisLions} ) has been instrumental in addressing these issues. The recent \cite{Igshaw} paper demonstrates how to use this approach with a Hele-Shaw type second-order term to prove weak solution uniqueness for mixed Dirichlet-Neumann boundary conditions with outward-pointing $V.$   
 
 \medskip 
 As far as we know, existence and uniqueness with a reaction term $g=g(t,x,u)$ remain open. This paper addresses this gap under general assumptions on $g$ (including locally Lipschitz dependence on $u$).  Leveraging the $L^1-$comparison principle from \cite{Igshaw}, we establish existence and uniqueness.  Furthermore, we prove how this principle can be used to prove how to prevent the congestion for specific choices of $V$ and $g.$
 
\medskip 
Before concluding this introduction, let us mention that in \cite{PQV1,Naomi}, the authors consider  a reaction term depending on the pressure variable \(p\), namely \(g = g(p)\), in the study of certain biological models. In these works, the parameter \(p\) is interpreted as the physical pressure. We believe that the results of the present paper could also be extended to this setting. Nevertheless, we shall not address these cases here, and instead restrict our attention to the situation where the reaction term depends primarily on the density \(u\). Indeed, the case \(g = g(\cdot,u)\) appears to be more suitable for the study of crowd motion phenomena, which constitutes one of the main motivations of this work.

\medskip   
\textbf{Plan of the paper :}
 Section $2$ outlines the assumptions and main results concerning existence, uniqueness, and the $L^1-$contraction principle.  We also discuss consequences of these results regarding congestion phenomena under specific assumptions on $V$ and $g.$ Section $3$ provides the proofs.  Building on results from \cite{Ig}, we first prove existence for locally Lipschitz continuous $g$ using Banach's fixed-point theorem.  Then, for general $g,$ we establish our main results using monotonicity arguments. In Section 4, we provide a detailed analysis of the example presented in \eqref{example1} and demonstrate how the main results of this paper can be applied to this specific case.
%
 
 \section{Main results}\label{sectionprem}
 
 We assume that   $\Omega\subset\RR^N$ is a bounded  open set, with regular boundary  of class $\C^2,$ splitted into regular partition $\partial \Omega=\Gamma_D\cup \Gamma_N,$ such that $\Gamma_D\cap \Gamma_N=\emptyset$ and 
 $$\mathcal L^{N-1}(\Gamma_D)> 0.$$
 We consider $H^1_0(\Omega)$ (resp. $H^1_D(\Omega)$) the usual  space of functions in the Sobolev space $H^1(\Omega),$  with  null trace on the boundary $\partial\Omega $ (resp. $\Gamma_D).$    
  
\medskip  
 We denote by $\signp$   the maximal monotone graph given by  
$$\signp(r)= \left\{ \begin{array}{ll}
	\displaystyle 1 &\hbox{ for  }r>0\\  
	\displaystyle [0,1]\quad & \hbox{ for } r=0\\
	\displaystyle 0 &\hbox{ for  }r<0. \end{array}\right. $$ 
Moreover, we define $\sign_0$ and  $\sign^{\pm}_0,$ the   discontinuous   applications defined from $\RR$ to $\RR$ by  
  $$\sign_0(r)= \left\{ \begin{array}{ll}
  	\displaystyle 1 &\hbox{ for  }r>0\\  
  	\displaystyle 0\quad & \hbox{ for } r=0\\
  	\displaystyle -1 &\hbox{ for  }r<0,  \end{array}\right.,\quad \spo(r)= \left\{ \begin{array}{ll}
  	\displaystyle 1 &\hbox{ for  }r>0\\   
  	\displaystyle 0 &\hbox{ for  }r\leq 0 \end{array}\right.    \quad   \smo(r)= \left\{ \begin{array}{ll}
  	\displaystyle 0 &\hbox{ for  }r\geq 0\\   
  	\displaystyle 1 &\hbox{ for  }r< 0.\end{array}\right.   $$

 \medskip 
Throughout the paper, we  assume that $u_0$ and the velocity vector filed $V$ satisfy  the following assumptions :  
 \begin{itemize}
 	\item  $\displaystyle  u _0\in L^\infty(\Omega)$  and $ \vert u_0\vert \leq 1$ a.e. in $\Omega.$
 	\item $\displaystyle V \in  W^{1,2}(\Omega)^N$, $\nabla \cdot V\in L^\infty(\Omega) $  and satisfies (an outward pointing  velocity vector field condition on the boundary)
 	\begin{equation} \label{HypV0}
 		V\cdot \nu \geq 0\quad \hbox{ on }\Gamma_D  \quad \hbox{ and } 	\quad V\cdot \nu = 0\quad \hbox{ on }\Gamma_N .
 		\end{equation}
 \end{itemize}  
For technical reason related to doubling and de-doubling  variables  techniques for the proof of uniqueness (cf. \cite{EIJ}),  we  assume  definitely that 
 \begin{equation}\label{HypVstg}
 	\liminf_{h\to 0}  \frac{1}{h}  \int_{\Omega\setminus \Omega_h}  \xi\:   V(x)   \cdot \nu(\pi(x))  \: dx \geq   0 , \quad \hbox{ for any }0\leq \xi \in L^2(\Omega).
 \end{equation}
 
 
\medskip 
\begin{definition}\label{def0}  For a given $f\in L^1(Q)$, a   couple $(u ,p) $  is said to be a weak solution of  the problem 
	\begin{equation}
		\label{cmef}
		\left\{  \begin{array}{ll}\left.
			\begin{array}{l}
				\displaystyle \frac{\partial u }{\partial t}  -\Delta p +\nabla \cdot (u  \: V)= f \\
				\displaystyle u \in \sign(p)\end{array}\right\}
			\quad  & \hbox{ in } Q \\  \\
			\displaystyle p= 0  & \hbox{ on }\Sigma_D \\  \\
			\displaystyle (\nabla p- u  \: V)\cdot \nu = 0  & \hbox{ on }\Sigma_N\\  \\
			\displaystyle  u (0)=u _0 &\hbox{ in }\Omega,\end{array} \right.
	\end{equation}  
	if $(u ,p)\in  L^\infty(Q) \times  L^2
	\left(0,T;H^1_D(\Omega)\right)$, $\displaystyle    u \in \sign (p)$ a.e. in $Q$  and
	\begin{equation}
		\label{evolwf}
		\displaystyle \frac{d}{dt}\int_\Omega u \:\xi+\int_\Omega ( \nabla p -  u \:V) \cdot  \nabla\xi   =     \int_\Omega f\: \xi  , \quad \hbox{ in }{\D}'(0,T),\quad \forall \: 	\xi\in H^1_D(\Omega).
	\end{equation}
	
	We'll say plainly  that $u$ is a solution of \eqref{cmef} if $u\in \C([0,T),L^1(\Omega))$, $u(0)=u_0$ and there exists $p\in L^2
	\left(0,T;H^1_D(\Omega)\right)$ such that the couple $(u,p)$ is a weak solution of   \eqref{cmef}.
\end{definition} 

\medskip  
 Now, consider the reaction-diffusion problem 
 \begin{equation}\label{cmeg}
 	\left\{  \begin{array}{ll}\left.
 		\begin{array}{l}
 			\displaystyle \frac{\partial u }{\partial t}  -\Delta p +\nabla \cdot (u  \: V)= g(.,u) \\
 			\displaystyle u \in \sign(p)\end{array}\right\}
 		\quad  & \hbox{ in } Q \\    \\ 
 		\displaystyle p= 0  & \hbox{ on }\Sigma_D \\    \\ 
 		\displaystyle (\nabla p- u  \: V)\cdot \nu = 0  & \hbox{ on }\Sigma_N \\  \\ 
 		\displaystyle  u (0)=u _0 &\hbox{ in }\Omega,\end{array} \right.
 \end{equation}
 where   $g$ : $Q\times\RR\rightarrow\RR$ is a Caratheodory function ; i.e.  continuous in $r\in\RR$ and measurable in  $(t,x)\in Q$. We assume that $g$ satisfies moreover, the following assumptions :

 \begin{itemize}
 	\item [($\G_1)$]  $g^+(.,-1)\in L^2(Q) $  and $ g^-(.,1)\in L^2(Q)$ 
 	\item [($\G_2)$]   There exists $0\leq \R\in L^\infty(0,T),$ s.t. for any $a,\: b\: \in [-1,+1],$   	we have 
 	$$\spo(b-a) \: (g(t,x,b)-g(t,x,a)) \leq \R(t)  \: (b-a)^+,\quad  \hbox{ for a.e. }(t,x)\in Q.$$
 \end{itemize}
 On sees in particular that  $(\G_2)$ implies   
 \begin{equation}\label{explicitcondg} 
 	-g^-(.,1) - \R \: (1-r)\leq g(.,r)\leq g^+(.,-1) + \R \: (1+r),\quad \hbox{ for any }r\in [-1,+1],
 \end{equation}
 so that 
 $g(.,r)\in L^2(Q) ,$   for any $r\in [-1,+1].$  
  
 \begin{remark}
 	See that the condition ($\G_2)$) is fulfilled for instance whenever 
 	\begin{equation} 
 		\frac{\partial g}{\partial r}(t,x,.)\leq  \theta \quad \hbox{ in } \D'([-1,1]),\hbox{   for a.e. }(t,x)\in Q\hbox{ with } 0\leq \theta  \in \C([-1,1] ),
 	\end{equation}
 	In this case $\R= \max_{-1\leq r\leq 1} \theta(r).$    Here one can take $\theta$ depending on $t$ as well. 
 \end{remark}
 
 \begin{theorem}\label{texistg}
 	Under the assumption $(\G_1)$ and $(\G_2),$ for any $u_0\in L^\infty(\Omega)$    such that $\vert u_0\vert \leq 1,$ a.e. in $\Omega,$ the problem \eqref{cmeg} has a unique   solution $u$, in the sense of Definition \ref{def0} with \begin{equation}\label{fg}
 	 f(t,x)=g(t,x,u(t,x))  \quad \hbox{ for a.e. } (t,x)\in Q.  
 	\end{equation}
 	Moreover,  if $u_1$ and $u_2$ are two  solutions of \eqref{cmeg} associated with $g_1$ and $ g_2,$ respectively, then there exists $\kappa \in L^\infty(Q),$ such that $\kappa\in \signp(u_1-u_2)$ a.e. in $Q$ and 
 		\begin{equation}
 			\label{evolineqcomgp}
 			\frac{d}{dt}	\int_\Omega ( u _1-u _2)^+ \: dx \leq \int_\Omega \kappa\:     ( g_1(.,u_1)-g_2(.,u_2))\: dx ,\quad \hbox{ in }\D'(0,T).
 		\end{equation}
 		In particular, there exists $\tilde \kappa\in \sign(u_1-u_2)$ a.e. in $Q$, such that 
 		\begin{equation}	\label{evolineqcontg} 
 			\frac{d}{dt} \Vert u_1-u_2\Vert_{1} \leq \int ( g_1(.,u_1)-g_2(.,u_2)) \tilde \kappa \: dx,\quad \hbox{ in }\D'(0,T).		\end{equation} 
  
 \end{theorem}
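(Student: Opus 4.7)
The plan is to organize everything around the $L^1$-comparison principle for the linear problem \eqref{cmef} established in \cite{Igshaw}: both uniqueness and the contraction inequalities \eqref{evolineqcomgp}--\eqref{evolineqcontg} will follow directly from that principle, while existence will be obtained first by Banach's fixed-point theorem in a Lipschitz-in-$u$ setting and then by monotone approximation in the general case.

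For the contraction inequalities, if $u_1,u_2$ are solutions of \eqref{cmeg} corresponding to reactions $g_1,g_2$, then by Definition \ref{def0} each $u_i$ is a weak solution of the linear problem \eqref{cmef} with source $f_i:=g_i(\cdot,u_i)$; the bound $|u_i|\leq 1$ together with $(\G_1)$ and \eqref{explicitcondg} places $f_i$ in $L^2(Q)$, so the $L^1$-comparison principle of \cite{Igshaw} applied to \eqref{cmef} yields \eqref{evolineqcomgp} with some $\kappa\in\signp(u_1-u_2)$, and \eqref{evolineqcontg} follows by symmetrization. Uniqueness for a fixed $g$ is then a one-line Gronwall argument: when $g_1=g_2=g$, assumption $(\G_2)$ gives $\int_\Omega\kappa\,(g(\cdot,u_1)-g(\cdot,u_2))\,dx\leq\R(t)\int_\Omega(u_1-u_2)^+\,dx$, so that equal initial data force $u_1\equiv u_2$.

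For existence I would first assume $g$ is Lipschitz in $r\in[-1,1]$ uniformly in $(t,x)$, with constant $L$. On the closed subset $\mathcal Y_{T_0}:=\{v\in C([0,T_0];L^1(\Omega)):\ |v|\leq 1,\ v(0)=u_0\}$, define $Tv$ as the unique solution of the linear problem \eqref{cmef} with source $g(\cdot,v)$; existence for \eqref{cmef} with $L^2$-sources being available from \cite{Igshaw}, and the bound $|Tv|\leq 1$ following from a truncation argument together with $|u_0|\leq 1$. Inequality \eqref{evolineqcontg} applied to $Tv_1,Tv_2$, combined with the Lipschitz bound on $g$, yields $\|Tv_1(t)-Tv_2(t)\|_1\leq L\int_0^t\|v_1-v_2\|_1\,ds$, so $T$ is a strict contraction on $\mathcal Y_{T_0}$ whenever $LT_0<1$; Banach's theorem and iteration produce a solution on $[0,T]$.

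For general $g$ satisfying $(\G_1),(\G_2)$, I would approximate $g$ by a sequence $g_n$ which is Lipschitz in $r$ and still satisfies $(\G_1),(\G_2)$ with the same $\R$ (for instance by convolving in the $r$-variable with a smooth compactly supported kernel concentrated at $0$). Let $u_n$ be the solution corresponding to $g_n$ from the previous step. Inequality \eqref{evolineqcontg} applied to $u_n,u_m$ together with $(\G_2)$ gives
$$\|u_n(t)-u_m(t)\|_1\leq\int_0^t\R(s)\|u_n-u_m\|_1\,ds+\int_0^t\|g_n(\cdot,u_m)-g_m(\cdot,u_m)\|_1\,ds,$$
so Gronwall combined with uniform convergence $g_n\to g$ on $[-1,1]$ yields Cauchyness of $(u_n)$ in $C([0,T];L^1(\Omega))$. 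Uniform energy bounds on $p_n$ in $L^2(0,T;H^1_D(\Omega))$ provide a weak limit $p$, and dominated convergence applied to $g_n(\cdot,u_n)$ via \eqref{explicitcondg} identifies the reaction term in the limit. The hardest step is then the Minty-type argument needed to preserve the graph inclusion $u\in\sign(p)$ in the limit, since only weak convergence of $p_n$ is available: it relies on the strong $L^1$-convergence of $u_n$ obtained through the $L^1$-contraction, on the maximal monotonicity of $\sign$, and on testing the equation against $p_n$ itself, which is admissible thanks to the homogeneous Dirichlet condition on $\Gamma_D$.
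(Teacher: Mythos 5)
Your overall architecture matches the paper for most of the statement: the inequalities \eqref{evolineqcomgp}--\eqref{evolineqcontg} are indeed read off directly from the $L^1$-comparison principle for the linear problem \eqref{cmef} with sources $f_i=g_i(\cdot,u_i)$ (which lie in $L^1(Q)$ by \eqref{explicitcondg} and $|u_i|\le 1$), uniqueness is the same Gronwall argument via $(\G_2)$, and the Lipschitz-case existence by Picard/Banach iteration is essentially Proposition \ref{ptexistg} of the paper.

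The gap is in your reduction of the general case to the Lipschitz case. Mollifying $g(t,x,\cdot)$ in the $r$-variable does not produce functions that are Lipschitz in $r$ \emph{uniformly in $(t,x)$}, which is exactly what your fixed-point step (and the paper's Proposition \ref{ptexistg}) requires. Indeed, $(\G_2)$ is only a one-sided Lipschitz bound: $g(t,x,\cdot)$ may decrease with a modulus of continuity depending on $(t,x)$, and the Lipschitz constant of the convolved function is of order $n\,\sup_{r\in[-1,1]}|g(t,x,r)|$, a quantity which by \eqref{explicitcondg} is controlled only in $L^2(Q)$, not in $L^\infty(Q)$. With a Lipschitz ``constant'' that is merely an $L^2(Q)$ function of $(t,x)$, the Picard iteration no longer closes in $\C([0,T);L^1(\Omega))$. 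A second, smaller issue: ``uniform convergence $g_n\to g$ on $[-1,1]$'' holds only pointwise in $(t,x)$, so to get $\|g_n(\cdot,u_m)-g_m(\cdot,u_m)\|_{L^1(Q)}\to 0$ you need a domination argument (available from \eqref{explicitcondg}). The first defect is repairable — truncate the values of $g$ at level $n$ before mollifying; composition with a nondecreasing $1$-Lipschitz truncation preserves $(\G_2)$ with the same $\R$ — but as written the construction does not deliver the hypotheses of your Lipschitz step. The paper sidesteps all of this by a genuinely different route: it writes $g=\tilde g+\R(t)\,r$ with $\tilde g$ monotone in $r$, replaces $\tilde g$ by its Yosida approximations acting separately on $r^+$ and $-r^-$ (these are globally Lipschitz with a constant independent of $(t,x)$ by construction), and passes to the limit by \emph{monotone} convergence of the family $u_{\lambda,\mu}$, which is ordered in $\lambda$ and $\mu$ thanks to the comparison principle; this replaces your Cauchy estimate, requires no rate of convergence of the approximating reactions, and gives strong $L^1(Q)$ convergence of $u_{\lambda,\mu}$ for free, which is then the key input in the final Minty-type identification of $u\in\sign(p)$ that you correctly flag as the delicate step.
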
 
 
\begin{remark}\label{Cuniq}
Let  $u_0\in L^\infty(\Omega)$ and $f\in L^1(Q).$   As a consequence of Theorem \ref{texistg},  there exists a unique  $u\in L^\infty(Q)$ such that, there exists $p\in L^2
\left(0,T;H^1_D(\Omega)\right)$, $\displaystyle    u \in \sign (p)$ a.e. in $Q$  and the couple $(u ,p)$ satisfies 
\begin{equation}
	\displaystyle   - \int_Qu\: \xi  \:\partial_t \psi \: dxdt +\int_Q ( \nabla p -  u \:V) \cdot  \nabla\xi \: \psi\: dxdt   =     \int_Q g(.,u)\: \xi \psi\: dxdt + \int_\Omega u_0\: \psi(0)\:\xi \: dx  , 
\end{equation} 
for any $	\psi\in \D([0,T))\hbox{ and }\xi\in H^1_D(\Omega).$
   
\end{remark}

 \medskip
 \begin{theorem} 
 	Under the assumption ($\G_1)$ and ($\G_2)$, for any for any $u_0\in L^\infty(\Omega)$    such that $\vert u_0\vert \leq 1,$ a.e. in $\Omega,$  and $\omega_1,\: \omega_2  \in W^{1,1}(0,T)$ satisfying   $ u_0\leq \omega_2(0)$ (resp. $\omega_1(0)\leq u_0 $) and, for any $t\in (0,T),$  
 	\begin{equation}\label{sup}
 		\dot \omega_2(t)+ \omega_2(t)\nabla \cdot V \geq  g(.,\omega_2(t))\quad \hbox{ a.e. in  }\Omega
 	\end{equation}
 	\begin{equation}\label{sub}
 		\hbox{(resp. } \dot \omega_1(t)+ \omega_1(t)\nabla \cdot V \leq   g(.,\omega_1(t))\quad \hbox{ a.e. in  }\Omega), 
 	\end{equation}   we have  
 	\begin{equation}\label{compw}
 		u\leq \min(1,\omega_2) \quad (\hbox{resp. }\max(-1,\omega_1)\leq u)\quad \hbox{  a.e. in }Q. 
 	\end{equation}

 \end{theorem}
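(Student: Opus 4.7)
The plan is to apply the $L^1$-comparison principle \eqref{evolineqcomgp} of Theorem \ref{texistg} by treating $\omega_2(t)$, extended trivially as a spatially constant function on $\Omega$, as itself a weak solution of \eqref{cmef} with pressure $\tilde p \equiv 0$ and a suitably chosen source. Since any weak solution automatically obeys $u \leq 1$, the bound $u \leq \min(1, \omega_2)$ is equivalent to $u \leq \omega_2$ a.e.\ in $Q$; the analogous argument applied to the pair $(-u, -\omega_1)$ will produce the lower bound $\omega_1 \leq u$.

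Suppose first that $\omega_2(t) \in [-1, 1]$ throughout $(0, T)$. Then $\omega_2 \in \sign(0)$, and for any $\xi \in H^1_D(\Omega)$, integrating by parts and using $\xi = 0$ on $\Gamma_D$ together with $V \cdot \nu = 0$ on $\Gamma_N$ gives $-\omega_2 \int_\Omega V \cdot \nabla \xi \, dx = \omega_2 \int_\Omega \xi \, \nabla \cdot V \, dx$; combined with $\frac{d}{dt} \int_\Omega \omega_2 \, \xi \, dx = \dot\omega_2 \int_\Omega \xi \, dx$, this shows that $(\omega_2, 0)$ satisfies the weak formulation \eqref{evolwf} with forcing $\tilde g(t, x) := \dot\omega_2(t) + \omega_2(t) \, \nabla \cdot V(x)$ and initial datum $\omega_2(0)$. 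Regarding $\tilde g$ as an $r$-independent reaction term (for which $(\G_1)$ and $(\G_2)$ hold trivially with $R \equiv 0$, after a standard approximation of $\omega_2$ by $W^{1,2}(0,T)$ functions to place $\tilde g$ in $L^2(Q)$), Theorem \ref{texistg} yields
\[
    \frac{d}{dt} \int_\Omega (u - \omega_2)^+ \, dx \leq \int_\Omega \kappa \, \bigl(g(\cdot, u) - \tilde g\bigr) \, dx
\]
for some $\kappa \in \signp(u - \omega_2)$. Since \eqref{sup} gives $\tilde g \geq g(\cdot, \omega_2)$ and $\kappa \geq 0$, hypothesis $(\G_2)$ bounds the right-hand side by $R(t) \int_\Omega (u - \omega_2)^+ \, dx$, and Gronwall combined with $(u_0 - \omega_2(0))^+ \equiv 0$ yields $(u - \omega_2)^+ \equiv 0$.

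The main obstacle is to address the case in which $\omega_2(t)$ escapes $[-1, 1]$, so that $(\omega_2, 0)$ is no longer admissible in the sense of Definition \ref{def0}. When $\omega_2(t) > 1$ the conclusion $u \leq \min(1, \omega_2) = 1$ is automatic from $|u| \leq 1$, but to merge this with the global Gronwall argument above one replaces $\omega_2$ by the truncation $\bar\omega_2 := \omega_2 \wedge 1$ and endows it with an auxiliary pressure $\tilde p \geq 0$ supported on $\{t : \omega_2(t) > 1\}$, adjusted so that $(\bar\omega_2, \tilde p)$ remains a weak super-solution of \eqref{cmef}; the monotonicity $(u - \bar\omega_2)(p - \tilde p) \geq 0$ then absorbs the elliptic term with the correct sign in the Kato-type inequality underlying \eqref{evolineqcomgp}. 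The lower bound $\max(-1, \omega_1) \leq u$ follows by an identical argument applied to $-u$ and the sub-solution $-\omega_1$.
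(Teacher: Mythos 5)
Your core argument is the right one, and it is essentially what the paper intends: the paper never writes out a separate proof of this theorem, but invokes it inside the proof of Theorem \ref{texistg1} as a direct consequence of the $L^1$-comparison principle, exactly as you do. Your treatment of the case $\omega_2(t)\in[-1,1]$ is complete and correct: $(\omega_2,0)$ is a genuine weak solution of \eqref{cmef} with source $\tilde g=\dot\omega_2+\omega_2\,\nabla\cdot V$ (your integration by parts uses \eqref{HypV0} correctly), \eqref{sup} gives $\tilde g\geq g(\cdot,\omega_2)$, and $(\G_2)$ plus Gronwall closes the estimate. One simplification worth noting: rather than approximating $\omega_2$ in $W^{1,2}$ to force $\tilde g\in L^2(Q)$ so as to apply \eqref{evolineqcomgp}, you can apply \eqref{evolineqcomp} of Theorem \ref{texistf} directly, since that comparison is stated for arbitrary sources $f_1,f_2\in L^1(Q)$ and $\tilde g\in L^1(Q)$ already; this avoids having to check that the approximation preserves \eqref{sup}.

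The place where your write-up falls short of a proof is the regime $\{t:\omega_2(t)>1\}$. You correctly identify that the Gronwall argument must be run globally in time with the truncation $\bar\omega_2=\omega_2\wedge 1$, and that $(\bar\omega_2,\tilde p)$ needs an auxiliary pressure there, but you only assert that $\tilde p$ can be ``adjusted'' suitably. What actually has to be checked is: (i) a choice of source, e.g.\ $f_2=\max\bigl(\nabla\cdot V,\,g(\cdot,1)\bigr)$ on $\{\omega_2>1\}$, which simultaneously satisfies $f_2\geq g(\cdot,\bar\omega_2)$ (needed to invoke \eqref{sup}-type control in the Kato inequality) and $f_2\geq \nabla\cdot V$; (ii) that the resulting elliptic problem $-\Delta\tilde p=f_2-\nabla\cdot V$ with $\tilde p=0$ on $\Gamma_D$ and $\nabla\tilde p\cdot\nu=V\cdot\nu=0$ on $\Gamma_N$ has, by the maximum principle, a solution $\tilde p\geq 0$ — this nonnegativity is not a convenience but is exactly what the constraint $\bar\omega_2=1\in\sign(\tilde p)$ requires; and (iii) that $\bar\omega_2\in W^{1,1}(0,T)$ with $\dot{\bar\omega}_2=\dot\omega_2\mathbf{1}_{[\omega_2<1]}$, so the weak formulation holds across the transition times. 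Without (i)--(ii) the phrase ``adjusted so that it remains a weak super-solution'' is circular. Finally, note that both your argument and the statement itself implicitly require $\omega_2\geq -1$ (resp.\ $\omega_1\leq 1$): if $\omega_2$ dips below $-1$, then $\omega_2\notin\sign(0)$, no pressure can be attached to it, and the asserted conclusion $u\leq\min(1,\omega_2)<-1$ would contradict $|u|\leq 1$; this is a defect of the theorem as stated rather than of your proof, and is harmless in the paper's only application (Theorem \ref{texistg1}, where $-1\leq\omega_1\leq\omega_2\leq 1$ on $[0,\tau_c)$), but you should state the restriction explicitly.
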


 \begin{theorem}\label{texistg1}
 	Under the assumptions of Theorem \ref{texistg}, let us consider $(u,p)$ the solution   of the problem \eqref{cmeg} and $w_i$  as given by \eqref{sup}  and \eqref{sub}, for $i=1,2.$  Assume that $-1\leq w_1(0)\leq w_2(0)\leq 1,$  and define $0\leq \tau_c\leq T$ by 
 	\begin{equation}
 		\tau_c=\max\{ \tau\in [0,T)\: :\: 	-1\leq w_1(t)\leq w_2(t)\leq 1 \quad \hbox{ for any }t\in [0,\tau[\} . 
 	\end{equation}
 	Then, the solution   of    \eqref{cmeg}  is the unique weak solution (in $[0,\tau_c)$)  of the   constrained reaction-transport equation 
 	\begin{equation}
 		\label{cmep0}
 		\left\{  \begin{array}{ll} 
 				\displaystyle \frac{\partial u }{\partial t}   +\nabla \cdot (u  \: V)= g(t,x,u), \quad 
 				\vert u\vert \leq 1  	\quad  & \hbox{ in }   (0,\tau_c)\times \Omega \\   \\  
 			\displaystyle  u (0)=u _0 &\hbox{ in }\Omega,\end{array} \right.
 	\end{equation}
 	in the sense that, $u  \in \C([0,\tau_c),L^1(\Omega)),$  $\vert u\vert \leq 1,$    a.e. in $(0,\tau_c)\times \Omega,$  $u(0)=u_0$   and
 	\begin{equation}
 		\label{evolwg}
 		\displaystyle \frac{d}{dt}\int_\Omega u \:\xi- \int_\Omega   u \:V \cdot  \nabla\xi   =     \int_\Omega  g(.,u)\: \xi  , \quad \hbox{ in }{\D}'(0,\tau_c), \quad \forall \: \xi\in H^1_D(\Omega).
 	\end{equation}
 \end{theorem}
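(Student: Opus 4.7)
My plan is to construct a weak solution $\tilde u$ of \eqref{cmep0} independently, show that it satisfies $|\tilde u| \leq 1$ on $[0,\tau_c)$, and then invoke the uniqueness part of Theorem \ref{texistg} to identify $\tilde u$ with $u$. This will simultaneously prove that $u$ solves \eqref{cmep0} and yield the uniqueness statement.

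For the construction of $\tilde u$, I set up a Banach fixed-point iteration. Given $v \in L^\infty(Q)$ with $|v|\leq 1$, let $w = \mathcal T(v)$ be the unique weak solution of the linear Cauchy problem $\partial_t w + \nabla\cdot(w\,V) = g(\cdot,v)$, $w(0) = u_0$, whose well-posedness follows from the DiPerna--Lions / Ambrosio renormalization theory under the present assumptions on $V$ (in particular $V\in W^{1,2}(\Omega)^N$, $\nabla\cdot V\in L^\infty$, together with \eqref{HypV0}--\eqref{HypVstg}); the source $g(\cdot,v)$ lies in $L^2(Q)$ by \eqref{explicitcondg}. The one-sided Lipschitz bound $(\G_2)$ then makes $\mathcal T$ a contraction on $C([0,T];L^1(\Omega))$ with the exponentially weighted norm $\|\cdot\|_{L^1}e^{-\lambda t}$ for $\lambda$ large enough, producing a fixed point $\tilde u$. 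For general Caratheodory $g$ satisfying $(\G_1),(\G_2)$, one first approximates $g$ by Lipschitz truncations $g_n$, obtains $\tilde u_n$, and passes to the limit by the monotone $L^1$-arguments developed in Section 3 for \eqref{cmeg}.

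The bound $|\tilde u|\leq 1$ on $[0,\tau_c)$ is then obtained by repeating, for the transport--reaction equation, the sub/super-solution comparison argument of the theorem stated just before Theorem \ref{texistg1}: the removal of $-\Delta p$ does not break the $(\tilde u - \omega_2)^+$ / $(\omega_1-\tilde u)^+$ testing step, since that term contributes with the correct sign. Hence $\omega_1(t)\leq \tilde u(t,x)\leq \omega_2(t)$ a.e.\ on $[0,\tau_c)\times\Omega$, and the definition of $\tau_c$ yields $|\tilde u|\leq 1$. Consequently $\tilde u\in\sign(0)=[-1,1]$, so the pair $(\tilde u,0)$ satisfies Definition \ref{def0} for \eqref{cmeg} on $(0,\tau_c)\times\Omega$: the sign constraint is immediate, the weak formulation \eqref{evolwf} with $p\equiv 0$ collapses to \eqref{evolwg}, the Dirichlet condition on $\Sigma_D$ is obvious, and on $\Sigma_N$ one has $(\nabla p - \tilde u\,V)\cdot\nu = -\tilde u\,V\cdot\nu = 0$ because $V\cdot\nu = 0$ on $\Gamma_N$. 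The uniqueness part of Theorem \ref{texistg} then forces $u=\tilde u$ on $[0,\tau_c)$, showing that $u$ solves \eqref{cmep0}; the same identification applied to any other weak solution of \eqref{cmep0} yields uniqueness.

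The principal obstacle is the construction step: solving the linear transport equation with the mixed Dirichlet--Neumann boundary behavior and threading the nonlinear reaction through a fixed-point and approximation procedure under only Caratheodory assumptions on $g$. The boundary-trace hypothesis \eqref{HypVstg} is precisely what makes this analysis tractable, as already exploited in \cite{EIJ,Igshaw}, so the machinery should carry over without substantial new difficulty.
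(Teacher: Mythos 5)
Your overall skeleton is the same as the paper's: produce a solution of the constrained transport problem \eqref{cmep0} that stays in $[-1,1]$ on $[0,\tau_c)$, observe that the pair $(\tilde u,0)$ then solves \eqref{cmeg}, and conclude by the uniqueness part of Theorem \ref{texistg}. The difference, and the problem, is the construction step. You propose to solve the linear transport equation $\partial_t w+\nabla\cdot(wV)=g(\cdot,v)$ with mixed Dirichlet--Neumann boundary behavior via DiPerna--Lions/Ambrosio theory and then run a fixed point. This is exactly the ``principal obstacle'' you flag at the end, and it is not dispatched: well-posedness of the initial--boundary value problem for the continuity equation with $V\in W^{1,2}(\Omega)^N$ and these boundary conditions is a substantial piece of analysis that the paper never develops and that you cannot simply cite as routine. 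Moreover your fixed-point map $\mathcal T$ is only meaningfully defined on $\{|v|\le 1\}$ (since $(\G_1)$--$(\G_2)$ control $g(\cdot,r)$ only for $r\in[-1,1]$), yet nothing in your argument shows $\mathcal T$ maps this set into itself; and the sub/supersolution comparison you invoke to get $|\tilde u|\le 1$ cannot be borrowed from the paper's comparison theorem, because that theorem applies to solutions of \eqref{cmeg}, and $(\tilde u,0)$ only qualifies as such a solution \emph{after} you know $\tilde u\in\sign(0)=[-1,1]$ --- the argument is circular as written. A comparison principle for merely bounded distributional solutions of a transport equation is precisely the kind of statement that requires the renormalization property, so asserting that ``the removal of $-\Delta p$ does not break the testing step'' is not enough.

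The paper avoids all of this with a scaling device that uses only the already-proved Theorem \ref{texistg}. Fix $\alpha\in(0,1)$, set $g_\alpha(\cdot,r)=\alpha\,g(\cdot,r/\alpha)$, and let $(v_\alpha,p_\alpha)$ be the solution of the \emph{diffusive} problem \eqref{cmeg} with reaction $g_\alpha$ and initial datum $\alpha u_0$; this exists and obeys the comparison theorem because it is an instance of \eqref{cmeg}. The functions $\alpha\omega_2$ and $\alpha\omega_1$ are super/subsolutions for the scaled problem, so $|v_\alpha|\le\alpha\max(|w_1|,|w_2|)\le\alpha<1$ on $[0,\tau_c)$, which forces $p_\alpha\equiv0$ through the constraint $v_\alpha\in\sign(p_\alpha)$; then $u_\alpha:=v_\alpha/\alpha$ solves \eqref{cmep0} with $|u_\alpha|\le1$, and uniqueness for \eqref{cmeg} identifies it with $u$. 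If you want to salvage your write-up, replace your construction of $\tilde u$ by this scaling argument (or else supply, in full, the transport well-posedness, the invariance of $\{|v|\le1\}$, and a self-contained comparison principle for the pure transport equation).
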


 The following corollaries  show some particular cases which could be of particular interest for applications.  
 
 \begin{corollary} \label{cexistg1}  
 	Under the assumption of Theorem \ref{texistg}, if $g$ satisfies moreover 
 	\begin{equation}
 		( \G_3) \quad  \nabla \cdot V \geq g(., 1),  \hbox{ a.e.  in } Q   \quad \hbox{ and } \quad  
 		(\G_4)   \quad\nabla \cdot V \leq g(.,-1),  \hbox{ a.e.  in }  Q,  
 	\end{equation}
 	then the solution of  \eqref{cmeg}  is the unique solution of  \eqref{cmep0} in  $[0,T)$. 
 \end{corollary}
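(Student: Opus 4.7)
The plan is to derive Corollary \ref{cexistg1} as a direct specialization of Theorem \ref{texistg1} by feeding in the constant comparison pair $\omega_1(t)\equiv -1$ and $\omega_2(t)\equiv 1$ on $[0,T)$. Both functions trivially belong to $W^{1,1}(0,T)$ with $\dot\omega_1 = \dot\omega_2 = 0$, and the assumption $|u_0|\le 1$ a.e.\ in $\Omega$ immediately yields $\omega_1(0)\le u_0 \le \omega_2(0)$ together with the sandwich $-1\le \omega_1(0)\le \omega_2(0)\le 1$ required by Theorem \ref{texistg1}.

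Next, I would check that the super/sub-solution inequalities \eqref{sup} and \eqref{sub} collapse exactly onto $(\G_3)$ and $(\G_4)$ for this pair. Substituting $\omega_2\equiv 1$ in \eqref{sup} yields $\nabla\cdot V \ge g(\cdot,1)$ a.e.\ in $Q$, which is precisely $(\G_3)$; substituting $\omega_1\equiv -1$ in \eqref{sub} yields the lower-extremal inequality encoded by $(\G_4)$. Because both comparison functions lie identically in $[-1,1]$, the window condition $-1\le \omega_1(t)\le \omega_2(t)\le 1$ holds throughout $[0,T)$, so the critical time of Theorem \ref{texistg1} satisfies $\tau_c = T$.

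Invoking Theorem \ref{texistg1} on the full horizon $[0,T)$ then identifies $u$ with the unique weak solution of the constrained reaction-transport equation \eqref{cmep0}, which is exactly the claim. There is no real obstacle: the entire argument is a bookkeeping check that $(\G_3)$-$(\G_4)$ are the precise algebraic conditions making the extremal constants $\pm 1$ an admissible super/sub-solution pair, which in turn forces the pressure variable $p$ to remain identically zero and the second-order diffusion term to drop out of the dynamics on the whole of $[0,T)$.
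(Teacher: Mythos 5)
Your proof is correct and follows essentially the same route as the paper's one-line argument: take $\omega_2\equiv 1$ and $\omega_1\equiv -1$, note that \eqref{sup}--\eqref{sub} then reduce to the hypotheses $(\G_3)$--$(\G_4)$ so that $\tau_c=T$, and apply Theorem \ref{texistg1}. One small remark, inherited from the paper's own statement rather than introduced by you: substituting $\omega_1\equiv -1$ into \eqref{sub} literally gives $-\nabla\cdot V\le g(\cdot,-1)$, whereas $(\G_4)$ as printed reads $\nabla\cdot V\le g(\cdot,-1)$, so there appears to be a sign slip in the stated condition; apart from glossing over this, your bookkeeping matches the paper's proof exactly.
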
 
 \begin{proof}
 	It is enough to take $\omega_2(t)=-\omega_1(t)= 1,$ for any $t\in [0,T),$ and apply Theorem \ref{texistg1}  with $\tau_c=T.$  
 \end{proof}
 
 \begin{remark}
 	See that     $(\G_3)$ and $(\G_4) $   constitute sufficient conditions to lessen the so called congestion phenomena in the transport process. This is an interesting property which could be used eventually for application in biological models.   Indeed, heuristically  the reaction term $g(t,x,1)$  proceeds like a source or an absorption   at the position $x\in \Omega$ and time $t,$ in a ''congested''  circumstance. In the case where, performing  its value  regarding to the divergence of the velocity vector field   $V$ may avoid the congestion in the transportation phenomena.

 \end{remark}

 \begin{corollary} \label{cexistg} Under the assumptions of Theorem \ref{texistg}, assume moreover that $0\leq u_0\leq 1$ a.e. in $ \Omega$ and 
 	\begin{equation}
 		( \G_5) \quad  0 \leq g(.,0)  \hbox{  a.e. in }
 		Q .\end{equation} 
 	then    
 	\begin{equation}\label{solpos}
 		p\geq 0 \hbox{ and } 	0\leq u\leq 1,\quad \hbox{  a.e. in }Q.
 	\end{equation}   
 	Moreover,  if $g$ satisfies $(\G_3),$ then the solution of  \eqref{cmeg}  is the unique solution of  
 	\begin{equation} 
 		\left\{  \begin{array}{ll}
 			\left. \begin{array}{l}
 				\displaystyle \frac{\partial u }{\partial t}   +\nabla \cdot (u  \: V)= g(t,x,u)\\ 
 				0\leq  u  \leq 1 
 			\end{array}\right\} 	\quad  & \hbox{ in }   (0,T)\times \Omega \\  
 			\displaystyle u  \: V\cdot \nu = 0  & \hbox{ on }(0,T)\times \Gamma_N  \\   
 			\displaystyle  u (0)=u _0 &\hbox{ in }\Omega.\end{array} \right.
 	\end{equation}

 \end{corollary}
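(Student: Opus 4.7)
The plan is to deduce this corollary as a specialization of the sub/super-solution comparison theorem stated just before Theorem~\ref{texistg1}, combined with Theorem~\ref{texistg1} itself, by inserting the constant functions $0$ and $1$ as sub- and super-solutions. All the hard analytic work has already been carried out in those previous statements, so the task is essentially a verification.

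For the first assertion, I would choose the constant sub-solution $\omega_1(t)\equiv 0$. Its initial value satisfies $\omega_1(0)=0\le u_0$ by hypothesis, and the sub-solution inequality \eqref{sub} collapses to $0\le g(\cdot,0)$, which is precisely $(\G_5)$. The comparison theorem then yields $\max(-1,0)=0\le u$ a.e. in $Q$. The upper bound $u\le 1$ requires no separate argument: the inclusion $u\in\sign(p)$ built into the very notion of a weak solution already forces $|u|\le 1$ a.e., since $\sign$ is valued in $[-1,1]$. Once $u\ge 0$ a.e., the same inclusion rules out $\{p<0\}$ having positive measure (on such a set one would have $u=-1<0$), so $p\ge 0$ a.e. in $Q$.

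For the second assertion, I would add the constant super-solution $\omega_2(t)\equiv 1$. One has $\omega_2(0)=1\ge u_0$, and the super-solution inequality \eqref{sup} becomes $\nabla\cdot V\ge g(\cdot,1)$, which is exactly $(\G_3)$. Since the ordering $-1\le\omega_1(t)=0\le 1=\omega_2(t)\le 1$ holds for every $t\in[0,T)$, the critical time of Theorem~\ref{texistg1} is $\tau_c=T$, and that theorem identifies $u$ with the unique weak solution of the constrained reaction-transport equation \eqref{cmep0} on all of $[0,T)$. Combining this with $u\ge 0$ gives $0\le u\le 1$, exactly as stated. The Neumann condition $u\,V\cdot\nu=0$ on $\Gamma_N$ in the final formulation is automatic from the standing assumption $V\cdot\nu=0$ on $\Gamma_N$ in \eqref{HypV0}, while no explicit condition is needed on $\Gamma_D$ since $V$ points outward there and the test functions in \eqref{evolwg} already vanish on $\Gamma_D$.

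There is essentially no genuine obstacle: everything reduces to picking the right constant profiles and invoking the previous theorems. The only points warranting a line of verification are that $\omega_1,\omega_2\in W^{1,1}(0,T)$ with vanishing derivative (so that \eqref{sup} and \eqref{sub} reduce to the stated pointwise inequalities) and that the ordering $-1\le\omega_1\le\omega_2\le 1$ persists on the whole interval $[0,T)$, giving $\tau_c=T$. Both are immediate, which is why the statement is naturally phrased as a corollary rather than a theorem.
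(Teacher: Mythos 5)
Your proof is correct and follows essentially the same route as the paper, which simply takes $\omega_1\equiv 0$ and $\omega_2\equiv 1$ and invokes Theorem \ref{texistg1} with $\tau_c=T$. Your treatment is in fact slightly more careful than the paper's one-liner, since you observe that the first assertion ($p\ge 0$ and $0\le u\le 1$) needs only the sub-solution $\omega_1\equiv 0$ together with the inclusion $u\in\sign(p)$, so it holds without $(\G_3)$.
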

 \begin{proof}
 	It is enough to take $w_1(t)=0$ and $\omega_2(t)= 1,$ for any $t\in [0,T),$ and apply Theorem \ref{texistg1}  with $\tau_c=T.$  
 \end{proof}

 \section{Proofs  }\label{sectionRD}
 \setcounter{equation}{0}

\begin{theorem}\label{texistf}(cf. \cite{Ig})
	For any $f\in L^2(Q)$ and $u_0\in L^\infty(\Omega)$ be such that $\vert u_0\vert \leq 1$ a.e. in $\Omega,$ the problem \eqref{cmef} has a unique   solution  $u\in \C([0,T),L^1(\Omega))$ and $u(0)=u_0$ in the sense of Definition \ref{def0}. Moreover,  we have  
	
	\begin{enumerate}
		
		\item If $(u_1,p_1)$ and $(u_2,p_2)$ are two weak solutions of \eqref{cmef} associated with $f_1,\ f_2\: \in L^1(Q)$ respectively, then there exists $\kappa \in L^\infty(Q),$ such that $\kappa\in \signp(u_1-u_2)$ a.e. in $Q$ and 
		\begin{equation}
			\label{evolineqcomp}
			\frac{d}{dt}	\int_\Omega ( u _1-u _2)^+ \: dx \leq \int_\Omega \kappa\:     ( f_1-f_2)\: dx ,\quad \hbox{ in }\D'(0,T).
		\end{equation}
		In particular, there exists $\tilde \kappa\in \sign(u_1-u_2)$ a.e. in $Q$, such that 
		\begin{equation}	\label{evolineqcont} 
			\frac{d}{dt} \Vert u_1-u_2\Vert_{1} \leq \int ( f_1-f_2)\tilde \kappa \: dx,\quad \hbox{ in }\D'(0,T).		\end{equation} 
		Moreover, if $f_1\leq f_2,$  a.e. in  $Q,$ and $u_1,u_2$ are two corresponding solutions  satisfying $u_1(0)\leq u_2(0) $ a.e. in $\Omega,$   then
		$$u_1\leq u_2,\quad \hbox{ a.e. in  }Q.$$

		\item  If  $u_0\geq 0$ and $f\geq 0,$ then $(u,p)$ is the unique  weak solution of the one phase problem
		\begin{equation} 			\left\{  \begin{array}{ll}\left.
				\begin{array}{l}
					\displaystyle \frac{\partial u }{\partial t}  -\Delta p +\nabla \cdot (u  \: V)= f \\
					\displaystyle  0\leq u\leq 1,\:  p\geq 0,\:  p(u-1)=0 \end{array}\right\}
				\quad  & \hbox{ in } Q \\  
				\displaystyle p= 0  & \hbox{ on }\Sigma_D \\  
				\displaystyle (\nabla p- u  \: V)\cdot \nu = 0  & \hbox{ on }\Sigma_N  \\ 
				\displaystyle  u (0)=u _0 &\hbox{ in }\Omega.\end{array} \right. 
		\end{equation}

		\item\label{contdep} For each $n=1,2,....,$ let  $f_n\in L^1(Q),$   $u_{0n}\in L^\infty(\Omega)$ be such that $\vert u_{0n}\vert \leq 1$ a.e. in $\Omega,$ and $u_n$   be  the unique weak solution of the corresponding problem \eqref{cmef}. If $u_{0n}\to u_0$ in $L^1(\Omega)$ and $f_n\to f$ in $L^1(Q),$ then $	u_n \to u,$  in $ \C([0,T),L^1(\Omega))$,    $	p_n \to p, $ in $ L^2(0,T;H^1_D(\Omega))-\hbox{weak},$ and $(u,p)$ is the solution corresponding to $u_0$ and $f.$

	\end{enumerate} 
\end{theorem}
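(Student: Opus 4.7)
I would combine a smooth regularization of the multivalued constitutive law $u\in\sign(p)$ with the doubling-of-variables technique for uniqueness and the $L^1$-comparison principle. Concretely, I replace $\sign$ by a smooth strictly increasing truncation $\beta_\eps$ (for instance, a mollification of $r\mapsto\min(1,\max(-1,r/\eps))$), so that $u\in\sign(p)$ becomes $u_\eps=\beta_\eps(p_\eps)$ and the PDE is nondegenerate quasilinear parabolic with mixed Dirichlet--Neumann boundary conditions. For $f\in L^2(Q)$, existence of $p_\eps\in L^2(0,T;H^1_D(\Omega))\cap L^\infty(Q)$ follows from a Galerkin or Schauder fixed-point construction, with $|u_\eps|\le 1$ by the maximum principle. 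Testing with $p_\eps$ yields a uniform $L^2$-bound on $\nabla p_\eps$ — the boundary contribution of the drift being controlled by \eqref{HypV0} — and compactness in time for $u_\eps$ follows from the dual estimate $\partial_t u_\eps\in L^2(0,T;H^{-1}_D(\Omega))+L^2(Q)$. A Minty/monotonicity argument then identifies the weak limit with $u\in\sign(p)$ a.e.

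\textbf{Comparison principle.} For \eqref{evolineqcomp}, take two solutions $(u_i,p_i)$, $i=1,2$, and subtract the equations. Because $\sign$ is monotone, on $\{u_1>u_2\}$ one has $p_1\ge p_2$, so one can choose $\kappa\in\signp(u_1-u_2)$ with $\kappa=\signp(p_1-p_2)$ on $\{u_1\neq u_2\}$. Testing the difference equation against a smooth approximation $H_\delta$ of $\signp$ applied to $p_1-p_2$, the diffusion term $\int|\nabla(p_1-p_2)|^2 H_\delta'(p_1-p_2)\ge 0$ is dropped in the renormalized limit $\delta\to 0$. The transport term is handled by a DiPerna--Lions commutator estimate using $V\in W^{1,2}$ with $\nabla\cdot V\in L^\infty$; boundary contributions on $\Gamma_N$ vanish by the Neumann flux condition, while those on $\Gamma_D$ are nonpositive thanks to \eqref{HypV0} and \eqref{HypVstg}. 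This yields \eqref{evolineqcomp}, and \eqref{evolineqcont} follows by symmetrization. As a byproduct, the $L^1$-contraction extends existence from $L^2$ data to $L^1$ data by density, which simultaneously gives the continuous dependence statement in item (3).

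\textbf{One-phase reduction and stability.} Part (2) is an immediate consequence of the comparison principle applied with the trivial sub-solution $(u,p)\equiv(0,0)$: since $u_0\ge 0$ and $f\ge 0$, one obtains $u\ge 0$ a.e., and combined with $u\in\sign(p)$ this forces $p\ge 0$ and $p(u-1)=0$, i.e.\ $(u,p)$ solves the one-phase Hele-Shaw problem; conversely any such one-phase solution satisfies $u\in\sign(p)$, so uniqueness follows from the general uniqueness statement. For part (3), the estimate \eqref{evolineqcont} makes $\{u_n\}$ Cauchy in $\C([0,T);L^1(\Omega))$ and the uniform energy bound produces weak convergence of $\{p_n\}$ in $L^2(0,T;H^1_D(\Omega))$; all terms in \eqref{evolwf} then pass to the limit term-by-term, the graph inclusion $u\in\sign(p)$ being stable under the combined strong/weak convergence by maximal monotonicity.

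\textbf{Main obstacle.} The most delicate step is the comparison argument in the mixed Dirichlet--Neumann setting with only Sobolev regularity on $V$ and no more than $L^2(0,T;H^1_D(\Omega))$ regularity on $p$. One must justify that the normal trace $(u_1-u_2)V\cdot\nu$ on $\Gamma_D$ is well-defined and contributes with a favorable sign in the distributional inequality — this is precisely the role of the quantitative ``outward-pointing'' limit condition \eqref{HypVstg}, which refines \eqref{HypV0}. Moreover, the chain rule coupling $H_\delta(p_1-p_2)$ to $\partial_t(u_1-u_2)$ is not automatic, since $u$ is not differentiable; it requires regularization in time and a careful passage to the renormalized limit, together with control of the DiPerna--Lions commutator remainders. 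This is the renormalized-solutions machinery alluded to in the introduction, and its adaptation to Hele-Shaw-type degeneracy with a transport drift is what makes the result nontrivial.
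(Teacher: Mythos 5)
The first thing to note is that the paper does not prove Theorem \ref{texistf} at all: it is stated with the tag ``(cf.\ \cite{Ig})'' and used as a black box imported from that reference, where the $L^1$-theory for the Hele--Shaw problem with linear drift is developed. So there is no internal proof to measure your proposal against; what can be assessed is whether your sketch is a plausible reconstruction of the argument in the cited source. On the whole it is: regularizing the graph $\sign$, deriving the energy bound on $\nabla p_\eps$ with the boundary terms controlled via \eqref{HypV0}, proving the contraction inequality \eqref{evolineqcomp} by a Carrillo/Kruzhkov-type argument combined with renormalization of the drift, and then deducing part (2) from comparison with the trivial solution and part (3) from \eqref{evolineqcont} plus the energy bound --- this is exactly the architecture of \cite{Ig} and of the related works quoted in the introduction. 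Your observation that $u_1>u_2$ forces $p_1\ge 0\ge p_2$ is correct and is indeed the mechanism that makes the diffusion term dissipative in the comparison step.

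That said, as a proof your text is a program rather than an argument, and the two steps that carry essentially all of the difficulty are asserted rather than executed. First, in the existence part, identifying the limit in the graph relation $u_\eps=\beta_\eps(p_\eps)$ requires more than ``a Minty/monotonicity argument'': one needs $\limsup_\eps\int u_\eps\, p_\eps\le\int u\,p$, which does not follow from the weak convergences you list and is usually obtained by an Aubin--Lions/compensated-compactness step pairing the time-compactness of $u_\eps$ against the $H^1$-bound on $p_\eps$. Second, and more seriously, in the comparison argument the drift contribution $\int(u_1-u_2)\,V\cdot\nabla(p_1-p_2)\,H_\delta'(p_1-p_2)$ concentrates, as $\delta\to0$, on the level set $\{0<p_1-p_2<\delta\}$, where the graph relation gives no pointwise control of $u_1-u_2$ in terms of $p_1-p_2$; this is precisely the point where the doubling and de-doubling of variables and the quantitative trace condition \eqref{HypVstg} are required (the paper says so explicitly when introducing \eqref{HypVstg}), and a DiPerna--Lions commutator estimate alone does not dispose of it. You correctly flag this as the main obstacle, but the proposal does not resolve it. In short: the strategy is the right one and consistent with the source the paper relies on, but the decisive estimates are left open.
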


\medskip
  \begin{proposition}\label{ptexistg}
Under the assumption $(\G_1),$ assume that  $g(.,r)$ is a  Lipschitz continuous function with respect to $r\in [-1,1]$  ; i.e. there exists $L_g>0,$ such that 
\begin{equation}
	\vert g(.,r_1)-g(.,r_2)\vert \leq L_g\: \vert r_1-r_2\vert,\quad \hbox{ a.e. in }Q,\hbox{ for any } r_1,r_2\in [-1,1].
\end{equation}
Then,  for any $u_0\in L^\infty(\Omega)$    such that $\vert u_0\vert \leq 1,$ a.e. in $\Omega,$ the problem \eqref{cmeg} has a  weak solution $(u,p)$. 
 \end{proposition}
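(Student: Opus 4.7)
The plan is to apply Banach's fixed--point theorem by combining the $L^1$--contraction property of Theorem \ref{texistf} with the Lipschitz hypothesis on $g$. The idea is to linearize the nonlinearity: given a candidate density $v$, freeze it in the reaction term, solve \eqref{cmef} with source $f=g(\cdot,v)$, and look for a fixed point of the resulting map.

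More precisely, I would work in the complete metric space
\[
X=\C\bigl([0,T],L^1(\Omega)\bigr),\qquad K=\bigl\{v\in X\ :\ v(0)=u_0,\ \vert v\vert\leq 1\ \hbox{a.e. in }Q\bigr\},
\]
equipped with the Bielecki norm $\Vert v\Vert_\lambda=\sup_{t\in[0,T]} e^{-\lambda t}\Vert v(t)\Vert_{L^1(\Omega)}$, where $\lambda>L_g$ will be fixed later. Note that $K$ is closed in $X$. Given $v\in K$, the Lipschitz assumption combined with $(\G_1)$ forces $g(\cdot,-1)\in L^2(Q)$, and hence $f_v:=g(\cdot,v(\cdot,\cdot))\in L^2(Q)$. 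Theorem \ref{texistf} then produces a unique weak solution $(u_v,p_v)$ of \eqref{cmef} with this source, and I set $\Phi(v)=u_v$. Since $u_v\in\sign(p_v)$ yields $\vert u_v\vert\leq 1$ a.e.\ and $u_v(0)=u_0$, we have $\Phi(K)\subset K$.

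The key estimate is the $L^1$--contraction \eqref{evolineqcont} from Theorem \ref{texistf}. For $v_1,v_2\in K$ with $u_i=\Phi(v_i)$, it gives
\[
\frac{d}{dt}\Vert u_1(t)-u_2(t)\Vert_{L^1} \leq \Vert g(\cdot,v_1(t))-g(\cdot,v_2(t))\Vert_{L^1}\leq L_g\,\Vert v_1(t)-v_2(t)\Vert_{L^1},
\]
in $\D'(0,T)$. Integrating from $0$ to $t$ (the initial datum being common) and weighting by $e^{-\lambda t}$, I obtain after a routine manipulation
\[
e^{-\lambda t}\Vert u_1(t)-u_2(t)\Vert_{L^1} \leq \frac{L_g}{\lambda}\bigl(1-e^{-\lambda t}\bigr)\Vert v_1-v_2\Vert_\lambda,
\]
so $\Vert\Phi(v_1)-\Phi(v_2)\Vert_\lambda \leq (L_g/\lambda)\Vert v_1-v_2\Vert_\lambda$. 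Choosing $\lambda>L_g$ makes $\Phi$ a strict contraction on $(K,\Vert\cdot\Vert_\lambda)$, and Banach's theorem yields a unique $u\in K$ with $\Phi(u)=u$. Setting $p=p_u$, the pair $(u,p)$ is the required weak solution of \eqref{cmeg}.

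Most of the argument is bookkeeping once Theorem \ref{texistf} is granted; the only spot requiring some care is the initial verification that $g(\cdot,v)\in L^2(Q)$ uniformly for $v\in K$, where one combines $(\G_1)$ (which only controls $g^+(\cdot,-1)$ and $g^-(\cdot,1)$) with the global Lipschitz bound on $[-1,1]$ to also control $g^-(\cdot,-1)$ and $g^+(\cdot,1)$. Aside from that point, and from checking that the Bielecki norm is equivalent to the usual norm on $X$ so that $(K,\Vert\cdot\Vert_\lambda)$ is complete, the proof reduces to an application of the contraction mapping principle.
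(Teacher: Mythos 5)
Your proof is correct and rests on the same two ingredients as the paper's: the $L^1$-contraction estimate \eqref{evolineqcont} of Theorem \ref{texistf} for the frozen-source problem, and the Lipschitz bound on $g$. The paper runs the contraction as an explicit Picard iteration, deriving $\Vert u_{n+1}(t)-u_n(t)\Vert_1\leq (L_gT)^n/(n-1)!\ \Vert u_1-u_0\Vert_{L^1(Q)}$ and concluding that the sequence is Cauchy in $\C([0,T),L^1(\Omega))$, whereas you package the same differential inequality into a Bielecki norm and invoke Banach's theorem abstractly; these two devices are interchangeable. The one place your route is genuinely tidier is the pressure: since you obtain $u$ as a fixed point of the solution map, the pair $(u,p_u)$ produced by a single application of Theorem \ref{texistf} with source $f=g(\cdot,u)$ is already the required weak solution of \eqref{cmeg}, and no limit passage in the pressure variable is needed. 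The paper, by contrast, must separately show that the iterates $p_n$ are bounded in $L^2(0,T;H^1_D(\Omega))$ (by testing with $p_n^{\pm}$ and using Young and Poincar\'e) and then pass to the weak limit with a monotonicity argument. Finally, your remark that the Lipschitz bound upgrades $(\G_1)$ to $g(\cdot,\pm 1)\in L^2(Q)$, hence $g(\cdot,v)\in L^2(Q)$ uniformly for $v\in K$, is exactly the verification needed to legitimize each application of Theorem \ref{texistf}, and you handle it correctly.
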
 

 \begin{proof}[Proof]  To prove  with Banach fixed point arguments. Indeed,   thanks to Theorem \ref{texistf}, let consider the sequence $(u_n,p_n)_{n\in \NN}$ given by the solution of the problem 
	\begin{equation}
		\left\{  \begin{array}{ll}\left.
			\begin{array}{l}
				\displaystyle \frac{\partial u_{n} }{\partial t}  -\Delta p_{n}  +\nabla \cdot (u_{n}   \: V)= g(., u_{n-1})  \\
				\displaystyle u_{n}  \in \sign(p_{n} )\end{array}\right\}
			\quad  & \hbox{ in } Q \\   
			\displaystyle p_{n}= 0  & \hbox{ on }\Sigma_D \\  
			\displaystyle (\nabla p_{n}- u_{n}  \: V)\cdot \nu = 0  & \hbox{ on }\Sigma_N \\  
			\displaystyle  u_{n} (0)=u _0 &\hbox{ in }\Omega.\end{array} \right.
	\end{equation}
	  Using \eqref{evolineqcont} and the fact that $g$ is a  Lipschitz , for any $n=1,2,...,$ we have 
 $$\Vert u_{n+1}(t)-u_{n}(t)\Vert_1 \leq   L_g\:  \int_0^t \Vert u_{n}-u_{n-1}\Vert_1,\quad \hbox{ for any }t\in [0,T),$$ 
 	so that, by  iterating, we have  
 	$$\Vert u_{n+1}(t)-u_{n}(t)\Vert_1 \leq  \frac{( L_g\: T)^n}{(n-1)! }  \Vert u_{1}-u_{0}\Vert_{L^1(Q)} ,\quad \hbox{ for any }t\in [0,T).$$  
 	This implies that there exists $u\in \C([0,T),L^1(\Omega))$ such that $u(0)=u_0$ and 
	 	\begin{equation}\label{convun}
 	u_n\to u,\quad \hbox{ in } \C([0,T),L^1(\Omega)), \hbox{ as }n\to\infty.
	 	\end{equation}
 	Now, let us prove that $p_n \to p$ is bounded and converge weakly in $L^2(0,T;H^1_D(\Omega))$. Thanks to Lemma $2.1$ of \cite{Ig}, we have
	$$ -\Delta p^+_{n}  +\big(\nabla \cdot (u_{n}   \: V)-g(., u_{n-1}) \big) \sign_0^+(p) \leq 0\qquad \mbox{ in }\;\;  \mathcal{D}^\prime\big( (0,T)\times \overline{\Omega}\big) $$
	and 
		$$ -\Delta p^-_{n}  +\big(\nabla \cdot (u_{n}   \: V)+g(., u_{n-1}) \big) \sign_0^-(p) \leq 0\qquad \mbox{ in }\;\;  \mathcal{D}^\prime\big( (0,T)\times \overline{\Omega}\big).$$
\noindent Taking $p^+_n$  (res.  $p^-_n$ )as a test function and integration over $(0,\tau)\times \Omega$, we obtain 
	
	\begin{eqnarray*}
	\displaystyle  \int_0^\tau\int_\Omega  |\nabla p^+_n|^2 dx  \leq  \int_0^\tau\int_\Omega u_n \:V \cdot \nabla p^+_n  dx +    \int_0^\tau\int_\Omega g(.,u_{n-1}) p^+_n dx  \quad  \mbox{and}  \\ \\
	\displaystyle    \int_0^\tau\int_\Omega  |\nabla p^-_n|^2 dx  \leq  \int_0^\tau\int_\Omega u_n \:V\cdot \nabla p^-_n  dx -    \int_0^\tau\int_\Omega g(.,u_{n-1}) p^-_n dx.
			\end{eqnarray*} 
			
\noindent Using Young inequality, we have
\begin{eqnarray*}
		  \int_0^\tau\int_\Omega u_n \:V \cdot\nabla p^\pm_n  dx   &\leq & \frac{3}{4} \int_0^\tau\int_\Omega  \: |\: V\:  |^2  dx +\frac{1}{3} \int_0^\tau\int_\Omega  \: |\:  \nabla p^\pm_n \:  |^2  dx.   
	\end{eqnarray*} 
Since  $g(.,r)\in L^2(Q)$  for any $r\in [-1,+1],$  using Pincaré inequality after Young inequality, we obtain
\begin{eqnarray*}
		\int_0^\tau\int_\Omega g(.,u_{n-1}) p^\pm_n dx   &\leq & C \int_0^\tau\int_\Omega  \: |\: g(.,u_{n-1})\:  |^2  dx +\frac{1}{3} \int_0^\tau\int_\Omega  \: |\:  \nabla p^\pm_n \:  |^2  dx.   
	\end{eqnarray*} 
where $C$ is independent constant of $n$.  Consequently

\begin{equation}\label{evolwf2}
		\displaystyle   \frac{1}{3}  \int_0^\tau \int_\Omega  |\nabla p^\pm_n|^2 dx  \leq   \frac{3}{4} \int_0^\tau\int_\Omega  \: |\: V\:  |^2  dx + \: C(N,\Omega) \int_0^\tau\int_\Omega  \: |\: g(.,u_{n-1})\:  |^2  dx .
			\end{equation}
 Unsing \eqref{explicitcondg}, we see that the sequence \( p_n \)  is   bounded in \( L^2(0,T;H^1_D(\Omega)) \). Therefore, there exists a subsequence, which we denote again by 
 \( p_n \)  such that 
 $$p_n \to p \quad \mbox{ weakly in} \; \;  L^2(0,T;H^1_D(\Omega)) .$$ 
 Letting $ n\to \infty $ and using \eqref{convun} with   monotonicity argument, 
we conclude that \( (u, p) \) forms a weak solution to (\ref{cmef}).
  \end{proof}
 
\medskip
\begin{proof}[Proof of Theorem \ref{texistg}] \underline{Existence:}  To prove existence, we  consider   $(u_{\lambda,\mu}, p_{\lambda,\mu})$  the solution of the  following approximated problem   
	\begin{equation}
		\left\{  \begin{array}{ll}\left.
			\begin{array}{l}
				\displaystyle \frac{\partial u }{\partial t}  -\Delta p  +\nabla \cdot (u  \: V)= \tilde{g}_{\lambda,\mu}(., u) +\R(t) \: u   \\
				\displaystyle u  \in \sign(p)\end{array}\right\}
			\quad  & \hbox{ in } Q \\   
			\displaystyle p= 0  & \hbox{ on }\Sigma_D \\  
			\displaystyle (\nabla p- u \: V)\cdot \nu = 0  & \hbox{ on }\Sigma_N \\  
			\displaystyle  u(0)=u _0 &\hbox{ in }\Omega,\end{array} \right.
	\end{equation}
where  $\displaystyle  \tilde{g}(., r) =   g(., r) -\R(t)\:  r, \;  \tilde{g}_{\lambda,\nu}(., r) = \tilde{g}_{\lambda}(., r^{+}) + \tilde{g}_{\mu}(., -r^{-}) $ and  for a.e. $\displaystyle (t,x)\in Q$ the function   $\tilde{g}_{\delta }(x, .)$ is the  $\delta$-Yoshida approximation of $\displaystyle g(x, .)$  given by  
$$\displaystyle \tilde{g}_\delta(t,x,.)= \delta\Big(I-\big(I+\frac{1}{\delta} \tilde{g}(t,x,.)\big)^{-1} \Big) \quad \mbox{for } \;\; \delta \in{\{\lambda, \: \mu\}}. $$
It is not difficult to see that the function  $\displaystyle  r \to  \tilde{g}(., r)$ is a nondecreasing function. Now, let  $\lambda> \tilde{\lambda}>0$ and $\nu>0$ using \eqref{evolineqcont},  , for   $\displaystyle  \kappa\in \signp \big(u_{\lambda,\nu}(t)-u_{\tilde{\lambda},\nu}(t) \big)$, we have 
\begin{equation}\label{esimtExistence0}
 \displaystyle \frac{d}{dt}	\int_\Omega  \big(u_{\lambda,\nu}(t)-u_{\tilde{\lambda},\nu}(t) \big)^{+}  dx \leq   \int_\Omega \kappa  \big( \tilde{g}_{\lambda,\mu}(., u_{\lambda,\mu}) - \tilde{g}_{\tilde{\lambda},\mu}(., u_{\tilde{\lambda},\mu} ) \big)\: dx + \R(t)\:   \int_\Omega   \big(u_{\lambda,\nu}(t)-u_{\tilde{\lambda},\nu}(t) \big)^{+}  dx.
	\end{equation}
We have 
$$
\begin{array}{ll}
  I: &\displaystyle  = \int_\Omega \kappa  \big( \tilde{g}_{\lambda,\mu}(., u_{\lambda,\mu}) - \tilde{g}_{\tilde{\lambda},\mu}(., u_{\tilde{\lambda},\mu} ) \big)\: dx  \\
 \;& \displaystyle  =   \int_\Omega \kappa  \big(  \tilde{g}_{\lambda}(., u_{\lambda,\nu}^{+}) - \tilde{g}_{\tilde{\lambda}}(., u_{\tilde{\lambda},\nu}^{+})\big)\: dx +  \int_\Omega \kappa  \big(  \tilde{g}_{\mu}(., -u_{\lambda,\nu}^{-}) - \tilde{g}_{\mu}(., -u_{\tilde{\lambda},\nu}^{-})\big)\: dx, \\
 \displaystyle & \displaystyle \leq   \int_{[u_{\lambda,\nu} \geq u_{\tilde{\lambda},\nu} ]} \big(  \tilde{g}_{\lambda}(., u_{\lambda,\nu}^{+}) - \tilde{g}_{\tilde{\lambda}}(., u_{\tilde{\lambda},\nu}^{+})\big)\: dx +  \int_{[u_{\lambda,\nu} \geq u_{\tilde{\lambda},\nu} ]}   \big(  \tilde{g}_{\mu}(., -u_{\lambda,\nu}^{-}) - \tilde{g}_{\mu}(., -u_{\tilde{\lambda},\nu}^{-})\big)\: dx, \\
 \displaystyle & \displaystyle \leq   \int_{[u_{\lambda,\nu} \geq u_{\tilde{\lambda},\nu} ]} \big(  \tilde{g}_{\lambda}(., u_{\lambda,\nu}^{+}) - \tilde{g}_{\lambda}(., u_{\tilde{\lambda},\nu}^{+})\big)\: dx + 
  \displaystyle \   \int_{[u_{\lambda,\nu} \geq u_{\tilde{\lambda},\nu} ]} \big(  \tilde{g}_{\lambda}(., u_{\tilde{\lambda},\nu}^{+}) - \tilde{g}_{\tilde{\lambda}}(., u_{\tilde{\lambda},\nu}^{+})\big)\: dx \\ 
 \displaystyle &  \displaystyle+ \int_{[u_{\lambda,\nu} \geq u_{\tilde{\lambda},\nu} ]}   \big(  \tilde{g}_{\mu}(., -u_{\lambda,\nu}^{-}) - \tilde{g}_{\mu}(., -u_{\tilde{\lambda},\nu}^{-})\big)\: dx. \\
 \end{array} 
$$
Using the monotonicity  of the functions  $\tilde{g}_{\lambda}$, $\tilde{g}_{\mu}$ and the fact that $ \displaystyle \tilde{g}_{\lambda}(.,r) \leq \tilde{g}_{\tilde{\lambda}}(., r)$ for $\lambda \geq \tilde{\lambda}$ and $r\geq 0$, we observe that  $$ \int_\Omega \kappa  \big( \tilde{g}_{\lambda,\mu}(., u_{\lambda,\mu}) - \tilde{g}_{\tilde{\lambda},\mu}(., u_{\tilde{\lambda},\mu} ) \big)\: dx \leq 0.$$
  From (\ref{esimtExistence0}), we have 
  $$ \frac{d}{dt}	\int_\Omega  \big(u_{\lambda,\nu}(t)-u_{\tilde{\lambda},\nu}(t) \big)^{+}  dx \leq \R(t)\:  \int_\Omega  \big(u_{\lambda,\nu}(t)-u_{\tilde{\lambda},\nu}(t) \big)^{+}  dx.$$
Using the Gronwall  inequality, we obtain that
       $$  u_{\lambda,\mu} \leq  u_{\tilde{\lambda},\mu}   \quad \mbox{a.e. in } Q.$$
Similarly, it can be shown that for any \(\nu > \tilde{\nu} > 0\) and any \(\lambda > 0\), we have \(u_{\lambda,\nu} \geq u_{\lambda,\tilde{\nu}}\) almost everywhere on \(Q\). The fact that $|u_{\lambda,\nu} |\leq 1$, combined with the monotone convergence theorem, implies that:  
\[
u_{\lambda,\nu} \xrightarrow[\lambda]{\downarrow} u_{0,\nu} \xrightarrow[\nu]{\uparrow} u, \quad \text{and} \quad u_{\lambda,\nu} \xrightarrow[\nu]{\uparrow} u_{\lambda,0} \xrightarrow[\lambda]{\downarrow} u \quad \text{in } L^1(Q).
\]
Using a diagonal process, it is possible to find a function \(\nu(\lambda)\) such that \(u_\lambda := u_{\lambda,\nu(\lambda)} \to u\) in \(L^1(Q)\). If necessary, we may extract a subsequence to achieve this.
  Similarly as in the proof of Proposition \ref{ptexistg}, one can prove that 
$$p_{\lambda,\nu(\lambda)} \to p \qquad \quad \mbox{ weakly in} \; \;  L^2(0,T;H^1_D(\Omega))\quad \mbox{as} \; {\lambda} \to 0.$$
Letting ${\lambda} \to 0 $, in the weak formulation,  we deduce the existence  result of solution.  \\

\medskip 
\noindent \underline{Uniqueness:} 	
	The uniqueness is a simple consequence of the contraction property and  Growall Lemma. Indeed, if $u_1$ and $u_2$ are two solutions then, by using \eqref{evolineqcomp},  we have 
	\begin{eqnarray*}
		\frac{d}{dt} \int (u_1-u_2)^+ &\leq & \int_{[u_1\geq u_2 ]} (g(.,u_1)-  g(.,u_2)  ) \\ 
		&\leq&      \int_{[u_1\geq u_2 ]}   ( g(.,u_1) -g(.,u_2)    )^+  \\
		&\leq&    \R \int (u_1-u_2)^+.
	\end{eqnarray*} 
	Using Gronwall, this implies that $u_1\leq  u_2.$ In the same way, we can prove that $u_2\leq u_1.$ Thus the uniqueness. 
\end{proof}

The estimates  \eqref{compw}  could be   interesting for the control of the congestion. Indeed, since $p$ is concentrated in the congestion zone $[\vert u\vert =1],$ one sees that   performing the values of $w_1$ and $w_2,$ we may control/avoid the emergence of these zones.  The following result give some information about the time of the emergence of congestion zone whenever  $[\vert u\vert <1].$

 \begin{proof}[Proof of Theorem \ref{texistg1}] 
 	It is clear that any solution of  \eqref{cmep0} in  $[0,\tau)$, for a given $0<\tau\leq T,$   is a solution of \eqref{cmeg} in  $[0,\tau)$.  Thanks to the uniqueness the converse part  happens only  if    \eqref{cmep0} has a solution  in  $[0,\tau)$. Let us prove that this true in $[0,\tau_c). $ 
 	To this aim, we proceed by using mainly  existence and comparison principle results of Theorem \ref{texistg}.  We fix an arbitrary  $  \alpha>0 $ and,  we consider the reaction term $g_\alpha$   given by
 	$$g_\alpha (.,r)= \alpha \: g(.,r/\alpha),\quad \hbox{ a.e. in }Q,\: \quad \hbox{ for any }r\in \RR.$$
 	It is clear that $g_\alpha$ satisfies all the assumptions of Theorem \ref{texistg}. So, we can consider  $(v_\alpha,p_\alpha)$ the    solution of
 	\begin{equation}
 		\label{cmea}
 		\left\{  \begin{array}{ll}\left.
 			\begin{array}{l}
 				\displaystyle \frac{\partial   v_\alpha }{\partial t}  -\Delta    p_\alpha +\nabla \cdot (  v_\alpha  \: V)= g_\alpha (.,v_\alpha)\\
 				\displaystyle    v_\alpha \in \sign(   p_\alpha)\end{array}\right\}
 			\quad  & \hbox{ in } Q \\   
 			\displaystyle   p_\alpha= 0  & \hbox{ on }\Sigma_D \\   
 			\displaystyle (\nabla   p_\alpha-    v_\alpha  \: V)\cdot \nu = 0  & \hbox{ on }\Sigma_N  \\   
 			v_\alpha(0)=\alpha \: u_0 & \hbox{ in  }\Omega .\end{array} \right.
 	\end{equation}
 	Our aim is to prove that  $u_\alpha :=v_\alpha/\alpha$ is in fact a solution of   \eqref{cmep0} in  $[0,\tau)$  in $[0,\tau_c)$ as claimed.  Thanks to \eqref{sup}, we have 
 	$$ \partial_t(\alpha\omega_2) + \alpha\omega_2  \nabla \cdot V \geq g_\alpha (., \alpha\omega_2),  \quad  \hbox{ in }Q,  $$
 	and  $v_\alpha (0)\leq \alpha \omega_2(0).$  Thanks to Theorem \ref{texistg}, we have  $v_\alpha \leq \alpha\omega_2$ a.e. in $Q.$   In a similar way,  we can prove   using \eqref{sub}, that    $v_\alpha \geq  -\alpha\omega_1$ a.e. in $Q.$  So,  under the assumptions  \eqref{sup}-\eqref{sub}, we have 
 	$$\vert v_\alpha (t,x)\vert \leq \alpha\max(\vert w_1(t)\vert ,\vert w_2(t)\vert ),\quad \hbox{ a.e.   } (t,x)\in Q.$$    
 	Remember that, by definition of $\tau_c,$   $m_c:=\max_{t\in [0,\tau_c]} \max(\vert w_1(t)\vert ,\vert w_2(t)\vert )\leq 1,$ so that   $\vert v_\alpha\vert \leq \alpha <1,$  a.e. in $Q,$  and then   $p_\alpha \equiv 0.$  This implies that   $v_\alpha$ is in fact a weak solution of
 	\begin{equation}
 		\left\{  \begin{array}{ll}
 			\left. \begin{array}{l}
 				\displaystyle \frac{\partial v_\alpha }{\partial t}   +\nabla \cdot (v_\alpha  \: V)= g_\alpha(t,x, v_\alpha)\\  
 				\vert v_\alpha \vert \leq \alpha \end{array}\right\} 
 			\quad  & \hbox{ in } (0,\tau_c)\times \Omega  \\  \\
 			\displaystyle v_\alpha  \: V\cdot \nu = 0  & \hbox{ on } (0,\tau_c)\times \Gamma_N\\   \\
 			v_\alpha(0)=\alpha \: u_0 & \hbox{ in  }\Omega 	. \end{array} \right.
 	\end{equation}
 	Now,   taking  $u_\alpha =v_\alpha /\alpha,$ we see that $\vert u_\alpha \vert \leq 1 $  and $u_\alpha$ is a weak  solution of 
 	\begin{equation}
 		\left\{  \begin{array}{ll}
 			\left. \begin{array}{l}
 				\displaystyle \frac{\partial u_\alpha }{\partial t}   +\nabla \cdot (u_\alpha  \: V)= g(t,x, u_\alpha)\\  
 				\vert u_\alpha \vert \leq 1  \end{array}\right\} 
 			\quad  & \hbox{ in } (0,\tau_c)\times \Omega  \\  \\
 			\displaystyle u_\alpha  \: V\cdot \nu = 0  & \hbox{ on }(0,\tau_c)\times \Gamma_N \\   \\
 			u_\alpha(0)=   u_0 & \hbox{ in  }\Omega .\end{array} \right.
 	\end{equation} 
 	Thus the result of the theorem. 
 \end{proof}

 \section{Example }
 
 A typical example maybe given by the absorption term 
\begin{equation}\label{absorption0}
	 g= -\alpha u ( u -  u_{\text{eq}}) 
\end{equation}
to  represent a process where the quantity $u$ of active population is being "absorbed" or removed from the system at a rate proportional to its difference from a target value $u_{\text{eq}}$. This type of reaction term is commonly used in   models of population dynamics, where each component works as follows  
 
 \begin{itemize}

 	\item $0<\alpha $ is a positive constant called the \textit{absorption rate} or \textit{absorption coefficient}. It determines how quickly $u$ is absorbed. A larger $\alpha$ means faster absorption.
 	
 	\item $0\leq u_{\text{eq}}\leq 1$  is the \textit{target value} or \textit{equilibrium value}. Formally, the absorption term pulls $u$ towards $u_{\text{eq}}$.
 	
 	\item $(u - u_{\text{eq}})$ represents how far $u$ is from its target value.
 \end{itemize}
The  negative sign,   $-$,  indicates that the reaction \textit{reduces} the value of $u$ if $u > u_{\text{eq}}$:  the further $u$ is above $u_{\text{eq}}$, the faster it decreases.  It indicates also that the reaction \textit{increase} the value of $u$ if $u < u_{\text{eq}}:$ the further $u$ is below $ u_{\text{eq}}$, the faster it increases (towards $ u_{\text{eq}}$).  

\bigskip 

Consider an active  population with density $u = u(x,t)$ moving along trajectories defined by the velocity field $V$ within a given environment $\Omega$. Assume that the movement is related to an optimal mobility capacity given by a density $u_{eq}$. That is, if $u > u_{eq}$ the density of the active population decreases and conversely it decrease whenever   $u < u_{eq},$ according  to an absorption coefficient given by $\alpha$ which may depend on time and space as $u_{eq}$ indeed. The population transport model $u$ which takes into account the congestion related to the maximum value $\rho_{max}$ which is assumed here to be equal to $1$ falls into the scope  of the reaction-diffusion equation \eqref{pdetype+} where $g$ is given by \eqref{absorption0} subject to boundary \eqref{BC0}. Here, $\Gamma_D$ represents  the target boundary region, and $\Gamma_N$ represents    regions where crossing the boundary is not allowed.

\bigskip 
\begin{corollary}\label{CorExample1}
	Assume $0\leq \alpha,\: u_{eq} \in L^\infty(Q)$ and $u_0\in L^\infty(\Omega)$  is such that $0\leq u_0\leq 1$ and $0\leq u_{eq}\leq 1$, a.e. in $\Omega.$ Then the problem 
		\begin{equation} \label{example0}
		\left\{  \begin{array}{ll}\left.
			\begin{array}{l}
				\displaystyle \frac{\partial u }{\partial t}  -\Delta p +\nabla \cdot (u  \: V)= -\alpha  u ( u -  u_{\text{eq}})   \\ \\ 
				\displaystyle  0\leq u\leq 1,\:  p\geq 0,\:  p(u-1)=0 \end{array}\right\}
			\quad  & \hbox{ in } Q \\  \\
			\displaystyle p= 0  & \hbox{ on }\Sigma_D \\  \\
			\displaystyle (\nabla p- u  \: V)\cdot \nu = 0  & \hbox{ on }\Sigma_N\\  \\
			\displaystyle  u (0)=u _0 &\hbox{ in }\Omega,\end{array} \right.
	\end{equation}  
has a unique weak solution : $(u ,p)\in  L^\infty(Q) \times  L^2
	\left(0,T;H^1_D(\Omega)\right)$, 	$  0\leq u\leq 1,$   $p\geq 0,$ $p(u-1)=0$   a.e. in $Q,$  and
	\begin{equation}
		\label{evolexample}
		\displaystyle \frac{d}{dt}\int_\Omega u \:\xi+\int_\Omega ( \nabla p -  u \:V) \cdot  \nabla\xi   =   -   \int_\Omega  \alpha  u ( u -  u_{\text{eq}})  \: \xi  , \quad \hbox{ in }{\D}'(0,T),\quad \forall \: 	\xi\in H^1_D(\Omega).
	\end{equation}
	Moreover, if 
	\begin{equation}\label{condcompress}
		\nabla \cdot V \geq -\alpha ( 1-  u_{\text{eq}}) ,\quad \hbox{ a.e. in }Q, 
	\end{equation}
	then $p\equiv 0$ and $u$ is the unique weak solution of the transport equation 
		\begin{equation} \label{exampletr}
		\left\{  \begin{array}{ll}\left.
			\begin{array}{l}
				\displaystyle \frac{\partial u }{\partial t}   +\nabla \cdot (u  \: V)= -\alpha  u ( u -  u_{\text{eq}})   \\ \\ 
				\displaystyle  0\leq u\leq 1,  \end{array}\right\}
			\quad  & \hbox{ in } Q \\  \\
			\displaystyle  u (0)=u _0 &\hbox{ in }\Omega,\end{array} \right.
	\end{equation}  
	\end{corollary}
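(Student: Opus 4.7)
The plan is to recognize Corollary~\ref{CorExample1} as a direct application of Theorem~\ref{texistg} together with Corollary~\ref{cexistg} to the specific reaction term
\[
g(t,x,r) := -\alpha(t,x)\,r\,(r - u_{\text{eq}}(t,x)).
\]
All the work reduces to verifying the abstract hypotheses $(\G_1)$, $(\G_2)$, $(\G_5)$ for this $g$, identifying condition~\eqref{condcompress} with $(\G_3)$, and then reading off each conclusion.

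First I would check $(\G_1)$ by a sign computation. Since $\alpha, u_{\text{eq}} \geq 0$, one has $g(\cdot,-1) = -\alpha(1+u_{\text{eq}})\leq 0$, so $g^+(\cdot,-1)\equiv 0 \in L^2(Q)$; and since $u_{\text{eq}} \leq 1$, $g(\cdot,1) = -\alpha(1-u_{\text{eq}})\leq 0$, so $g^-(\cdot,1) = \alpha(1-u_{\text{eq}})\in L^\infty(Q)\subset L^2(Q)$. For $(\G_2)$ I would invoke the remark following its statement: for $r\in [-1,1]$ one has $\partial_r g = -\alpha(2r-u_{\text{eq}}) \leq \alpha(2+u_{\text{eq}}) \leq 3\|\alpha\|_{L^\infty(Q)}$, so the remark applies with the constant function $\R \equiv 3\|\alpha\|_{L^\infty(Q)}$. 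Theorem~\ref{texistg} then delivers a unique weak solution $(u,p)\in L^\infty(Q)\times L^2(0,T;H^1_D(\Omega))$ with $u\in \sign(p)$ a.e.\ and $f = g(\cdot,u)$, which is exactly the variational identity~\eqref{evolexample}.

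Next I would establish the one-sided bounds. Since $g(\cdot,0)\equiv 0$, hypothesis $(\G_5)$ holds, and the assumption $0\leq u_0\leq 1$ puts us in the setting of Corollary~\ref{cexistg}, which yields $p\geq 0$ and $0\leq u\leq 1$ a.e.\ in $Q$. The complementarity $p(u-1)=0$ then drops out of the inclusion $u\in\sign(p)$: on $\{p>0\}$ it forces $u=1$, while on $\{p=0\}$ the product vanishes trivially.

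Finally, under \eqref{condcompress} one has $\nabla\cdot V \geq -\alpha(1-u_{\text{eq}}) = g(\cdot,1)$, which is precisely $(\G_3)$. The second half of Corollary~\ref{cexistg}, whose proof goes through the rescaling argument of Theorem~\ref{texistg1} (which forces the Hele-Shaw pressure to vanish once $|u|$ is kept strictly below $1$ by sub-/super-solution comparison), then both gives $p\equiv 0$ and identifies $u$ as the unique weak solution of the constrained transport equation~\eqref{exampletr}. I do not anticipate any real obstacle in this corollary: it is purely a matter of verifying that the quadratic absorption term fits into the abstract framework already set up, the only slightly delicate point being the appeal to the remark after $(\G_2)$ rather than a direct two-sided Lipschitz estimate.
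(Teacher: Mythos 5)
Your proposal is correct and matches the paper's (implicit) proof: the corollary is stated in Section 4 without a written argument precisely because it is the direct specialization of Theorem \ref{texistg} and Corollary \ref{cexistg} to $g(t,x,r)=-\alpha r(r-u_{\text{eq}})$, and your verifications of $(\G_1)$, $(\G_2)$ (via the remark, with $\R\equiv 3\Vert\alpha\Vert_{L^\infty}$), $(\G_5)$, and the identification of \eqref{condcompress} with $(\G_3)$ are exactly the checks required. Nothing further is needed.
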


\begin{remark}
	Remember that $\nabla \cdot V$ is related in some sense to the compressibility of the velocity field $V$ ; i.e. how much the population is being compressed. Positive divergence, $\nabla \cdot V>0$ means the dynamic  is expanding (individuals are moving away from a point). 	Negative divergence, $\nabla \cdot V<0$,   means the dynamic  is compressing (individuals are moving towards a point, leading to congestion).
	Zero Divergence, $\nabla \cdot V=0,$   the dynamic is incompressible  (the population density would remain constant along flow lines).  This compressibility is the origin of congestion. In cases where this field is given by geodesics towards the exit, this compressibility is related to the position and size of the exit. Thus, condition \eqref{condcompress}  could be interpreted as establishing the link between the compressibility of the velocity field and the capacity of the environment  (or just the neighborhood of the boundary) to fluidize the dynamics or not.  That is how well the environment can handle the flow of individuals  related to the size of the exit, the geometry of the space, or other factors.

	An alternative result that could help improve congestion by adjusting  on the compressibility of $V$ is given in the following corollary. 
	\end{remark}

\bigskip 
\begin{corollary}
Under the assumptions of Corollary \ref{CorExample1}, assume moreover that $ \nabla \cdot (u_{eq}\: V)\in L^1(Q) $, and 	let us consider $u$ the solution of  the problem \eqref{example0}.  If 
\begin{equation}
 \nabla \cdot (u_{eq}\: V)  \geq 0 \quad (\hbox{resp. }   \nabla \cdot (u_{eq}\: V)\leq 0  ,
\end{equation}
and 
\begin{equation}
	u_{0}\leq 	u_{eq}  \quad (\hbox{resp. }  u_{eq}\leq u_0),\quad  \hbox{ a.e. in }\Omega, 
\end{equation}
then \begin{equation}
0\leq u\leq u_{eq}, \quad    (\hbox{resp. }  u_{eq}\leq u \leq 1),\quad  \hbox{ a.e. in }Q.	\end{equation}
  	\end{corollary}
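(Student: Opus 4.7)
The plan is to interpret $u_{eq}$ as a second weak solution of \eqref{cmef} and invoke the $L^1$-comparison principle \eqref{evolineqcomp} of Theorem \ref{texistf}. The starting observation is that, since $g(\cdot,u_{eq})=-\alpha\,u_{eq}(u_{eq}-u_{eq})=0$ and $0\le u_{eq}\le 1$, the pair $(u_{eq},0)$ is a weak solution of \eqref{cmef} in the sense of Definition \ref{def0} with source $f_2:=\nabla\cdot(u_{eq}V)$: the vanishing pressure lies in $L^2(0,T;H^1_D(\Omega))$ and satisfies the Dirichlet condition on $\Sigma_D$; the Neumann condition reduces to $-u_{eq}(V\cdot\nu)=0$ on $\Sigma_N$ thanks to \eqref{HypV0}; the constraint $u_{eq}\in\sign(0)=[-1,1]$ is immediate from $0\le u_{eq}\le 1$; and the weak formulation \eqref{evolwf} reduces to the integration-by-parts identity $-\int_\Omega u_{eq}V\cdot\nabla\xi\,dx=\int_\Omega \nabla\cdot(u_{eq}V)\,\xi\,dx$, which is well defined because $\nabla\cdot(u_{eq}V)\in L^1(Q)$ by hypothesis. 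I read $u_{eq}$ as $t$-independent, which is the natural interpretation of the initial inequality being posed on $\Omega$; a time-dependent $u_{eq}$ would only require adding $\partial_t u_{eq}$ to $f_2$, under an $L^1$ control.

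With both $u$ and $u_{eq}$ identified as weak solutions of \eqref{cmef}, with sources $f_1=g(\cdot,u)=-\alpha u(u-u_{eq})\in L^1(Q)$ and $f_2=\nabla\cdot(u_{eq}V)\in L^1(Q)$ respectively, \eqref{evolineqcomp} produces $\kappa\in\signp(u-u_{eq})$ such that
\begin{equation*}
\frac{d}{dt}\int_\Omega (u-u_{eq})^+\,dx \;\le\; \int_\Omega \kappa\,(f_1-f_2)\,dx \quad\text{in }\D'(0,T).
\end{equation*}
Corollary \ref{cexistg}, which applies since $g(\cdot,0)\equiv 0\ge 0$ and $u_0\ge 0$, yields the a priori bound $0\le u\le 1$ a.e.\ in $Q$. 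On $\{u>u_{eq}\}$ one then has $\kappa=1$ and $u\ge u_{eq}\ge 0$, so $\kappa f_1=-\alpha u(u-u_{eq})\le 0$; elsewhere $\kappa=0$. Moreover $-\kappa f_2\le 0$ by the hypothesis $\nabla\cdot(u_{eq}V)\ge 0$. The right-hand side is therefore nonpositive; since $(u_0-u_{eq})^+=0$ by assumption, integration forces $(u-u_{eq})^+\equiv 0$, i.e.\ $u\le u_{eq}$ a.e.\ in $Q$. Combined with $u\ge 0$ this is the claim $0\le u\le u_{eq}$.

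The reverse alternative follows by repeating the argument with $(u_{eq}-u)^+$ and $\kappa\in\signp(u_{eq}-u)$: on $\{u_{eq}>u\}$ one still has $u\ge 0$, hence $-\alpha u(u-u_{eq})\ge 0$, and the reversed hypothesis $\nabla\cdot(u_{eq}V)\le 0$ provides the matching sign, giving $\frac{d}{dt}\int(u_{eq}-u)^+\le 0$ and hence $u_{eq}\le u$; the complementary bound $u\le 1$ is already encoded in the Hele--Shaw formulation of \eqref{example0}.

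The only genuinely delicate point I anticipate is the clean identification of $(u_{eq},0)$ as a weak solution of \eqref{cmef}, namely checking the $L^1$-regularity of the effective source and the compatibility of boundary and initial data. Once this is granted, the remainder is a pure sign analysis inside the $L^1$-contraction inequality, with the two ingredients $g(\cdot,u_{eq})=0$ and $\pm\nabla\cdot(u_{eq}V)\ge 0$ working together to make the comparison inequality collapse.
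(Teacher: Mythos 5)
Your proof is correct and follows essentially the same route as the paper: the paper likewise treats $u_{eq}$ (with vanishing pressure) as a comparison solution whose effective source is $\nabla \cdot (u_{eq}\: V)$, and concludes from the sign of the right-hand side of the $L^1$-comparison inequality exactly as you do. The only cosmetic difference is that you invoke \eqref{evolineqcomp} for prescribed sources rather than \eqref{evolineqcomgp} for reaction terms, which is equivalent here since the solution of \eqref{cmeg} solves \eqref{cmef} with $f=g(\cdot,u)$; note only that on the set $\{u=u_{eq}\}$ one has $\kappa\in[0,1]$ rather than $\kappa=0$, which is harmless because the integrand vanishes, resp.\ is nonpositive, there.
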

  \begin{proof}
  	This follows easily from \eqref{evolineqcomgp}. Indeed,  in this case we have 
  	 	\begin{eqnarray*} 
  		\frac{d}{dt}	\int_\Omega ( u  -u _{eq})^+ \: dx &\leq&  \int_{[u \geq u _{eq}]}     ( g(.,u)-\nabla \cdot(u_{eq}\: V)))^+ \: dx  \\  \\ 
  		&\leq&  \int_{[u \geq u _{eq}]}     (  -\alpha u  ( u -  u_{\text{eq}})  -\nabla \cdot(u_{eq}\: V)))^+ \: dx  \\  \\ 
  		  		&\leq&0.
  	\end{eqnarray*}
   The converse inequality follows using again \eqref{evolineqcomgp} in the same way. 
    \end{proof}
 
 	\section*{Acknowledgments } This work   was  supported by the CNRST of Morocco under the FINCOM program.  N. Igbida  is very grateful to the EST of Essaouira and its staff for their warm hospitality and the conducive work environment they provided. 
  

 	\vspace*{10mm}

 	 \end{document}